\theoremstyle{plain}
\newtheorem{para}{}[section]
\newtheorem{thm}[para]{Theorem}
\newtheorem{prop}[para]{Proposition}
\newtheorem{lemma}[para]{Lemma}
\newtheorem{cor}[para]{Corollary}
\newtheorem{fact}[para]{Fact}
\newtheorem*{question}{Question}
\theoremstyle{remark}
\newtheorem{remark}[para]{Remark}
\theoremstyle{definition}
\newtheorem{dfn}[para]{Definition}
\newtheorem{problem}{Problem}
\newcommand{\wt}[1]{\widetilde{#1}}
\newcommand{\co}{\colon\thinspace}
\newcommand{\bound}{\partial}
\newcommand{\Z}{\mathbb{Z}}
\renewcommand{\c}{{\sf c}}
\newcommand{\f}{{\sf f}}
\newcommand{\g}{{\sf g}}
\newcommand{\h}{{\sf h}}
\newcommand{\p}{{\sf p}}
\newcommand{\s}{{\sf s}}
\renewcommand{\t}{{\sf t}}
\newcommand\calb{\mathcal{B}}
\newcommand\calh{\mathcal{H}}
\newcommand\calp{\mathcal{P}}
\newcommand\calq{\mathcal{Q}}
\newcommand\sfa{{\sf a}}
\newcommand\sfb{{\sf b}}
\newcommand\sfc{{\sf c}}
\newcommand\sfm{{\sf m}}
\newcommand\sfn{{\sf n}}
\newcommand\sfr{{\sf r}}
\newcommand\sfx{{\sf x}}
\definecolor{gray1}{gray}{0.6}
\begin{document}

\title{Hidden symmetries via hidden extensions}

\author{Eric Chesebro}
\address{Department of Mathematical Sciences, University of Montana} 
\email{Eric.Chesebro@mso.umt.edu} 

\author{Jason DeBlois}
\address{Department of Mathematics, University of Pittsburgh} \email{jdeblois@pitt.edu}

\begin{abstract}  This paper introduces a new approach to finding knots and links with hidden symmetries using ``hidden extensions'', a class of hidden symmetries defined here.  We exhibit a family of tangle complements in the ball whose boundaries have symmetries with hidden extensions, then we further extend these to hidden symmetries of some hyperbolic link complements.
\end{abstract}

\maketitle

%%%%%%%%%%%%%%%%%%%%%%%%%%%%%%%%%%

A \textit{hidden symmetry} of a manifold $M$ is a homeomorphism of finite-degree covers of $M$ that does not descend to an automorphism of $M$.  By deep work of Margulis, hidden symmetries characterize the arithmetic manifolds among all locally symmetric ones: a locally symmetric manifold is arithmetic if and only if it has infinitely many ``non-equivalent'' hidden symmetries (see \cite[Ch.~6]{Zimmer}; cf.~\cite{NeumReid}).

Among hyperbolic knot complements in $S^3$ only that of the figure-eight is arithmetic \cite{ReidFig8}, and the only other knot complements known to possess hidden symmetries are the two ``dodecahedral knots'' constructed by Aitchison--Rubinstein \cite{AiRu}.  Whether there exist others has been an open question for over two decades \cite[Question 1]{NeumReid}.   Its answer has important consequences for commensurability classes of knot complements, see \cite{ReidWalsh} and \cite{BBoCWaGT}.

The partial answers that we know are all negative.  Aside from the figure-eight, there are no knots with hidden symmetries with at most fifteen crossings \cite{Dunfield} and no two-bridge knots with hidden symmetries \cite{ReidWalsh}.  Macasieb--Mattman showed that no hyperbolic $(-2,3,n)$ pretzel knot, $n \in \Z$, has hidden symmetries \cite{MacMat}.  Hoffman showed the dodecahedral knots  are commensurable with no others \cite{Hoffman}.

Here we offer some positive results with potential relevance to this question.  Our first main result exhibits hidden symmetries with the following curious feature.

\begin{dfn}\label{the hiddens}  For a manifold $M$ (possibly with boundary) and a submanifold $S$ of $M$, a \textit{hidden extension} of a self-homeomorphism $\phi$ of $S$ is a hidden symmetry $\Phi\co M_1\to M_2$ of $M$, where $p_i\co M_i\to M$ are connected, finite-sheeted covers for $i=1,2$, that lifts $\phi$ on a component of $p_1^{-1}(S)$.\end{dfn}

We use a family $\{L_n\}$ of two-component links constructed in previous work \cite{CheDe}.  For each $n$, $L_n$ is assembled from a tangle $S$ in $B^3$, $n$ copies of a tangle $T$ in $S^2\times I$, and the mirror image $\overline{S}$ of $S$.  Figure \ref{elltwo} depicts $L_2$, with light gray lines indicating the spheres that divide it into copies of $S$ and $T$.  For $n\in\mathbb{N}$ and $m\geq 0$, we will also use a tangle $T_n\subset L_{m+n}$: the connected union of $S$ with $n$ copies of $T$.  For instance, $L_2$ contains a copy of $T_1$ (which is pictured in Figure \ref{SUT} below) and of $T_2$.  

Upon numbering the endpoints of $T_n$ as indicated in Figure \ref{SUT}, order-two even permutations determine \textit{mutations}: mapping classes of $\partial (B^3-T_n)$ induced by $180$-degree rotations of the sphere obtained by filling the punctures.

\begin{figure}%[ht]

\setlength{\unitlength}{.1in}

\begin{picture}(40,10)
\put(0,0) {\includegraphics[width=4in]{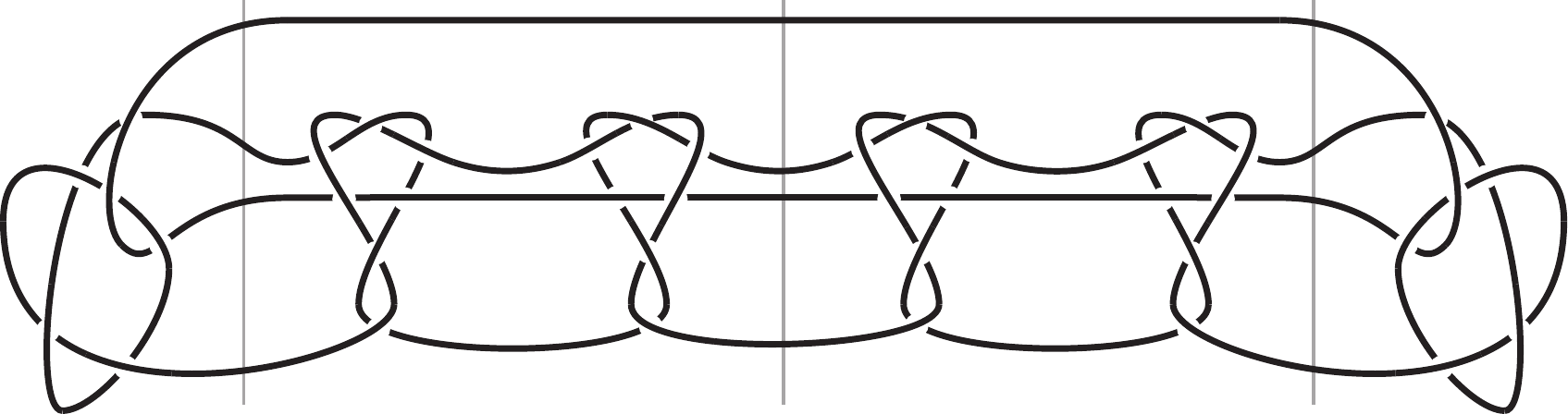}}
\put(12.5,0.25) {$T$}
\put(2,9) {$S$}
\end{picture}

\caption{The link $L_2$} \label{linkspic}
\label{elltwo}
\end{figure}

\theoremstyle{plain}
\newcommand\HiddenExtension{For $n\in\mathbb{N}$, the mutation of $\partial(B^3 - T_n)$ determined by $(1\,3)(2\,4)$ has a hidden extension over a cover of $B^3-T_n$ and for any $m\in\mathbb{N}$, taking $T_n\subset L_{m+n}$, a hidden extension over a cover of $S^3-L_{m+n}$.}
\newtheorem*{HiddenXtnProp}{Theorem \ref{hidden extension}}
\begin{HiddenXtnProp}\HiddenExtension \end{HiddenXtnProp}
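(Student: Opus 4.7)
For each $n\in\mathbb{N}$, the plan is to exhibit two connected finite covers $p_i\co M_i\to B^3-T_n$ ($i=1,2$) together with a homeomorphism $\Phi\co M_1\to M_2$ that (i) lifts the mutation $\phi$ determined by $(1\,3)(2\,4)$ on some component of $p_1^{-1}(\partial(B^3-T_n))$, and (ii) does not descend to a self-homeomorphism of $B^3-T_n$. The construction should respect the tangle decomposition $T_n = S \cup (n \text{ copies of } T)$, so that the same data can be extended across the remaining $m$ copies of $T$ and the mirror tangle $\overline{S}$ to produce a hidden extension on $S^3-L_{m+n}$.

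A preliminary task is to rule out that $\phi$ itself extends to a self-homeomorphism of $B^3-T_n$, for then any $\Phi$ lifting $\phi$ on the boundary is automatically hidden. I would argue this directly from the picture of $T_n$ in Figure \ref{SUT}: the permutation $(1\,3)(2\,4)$ is either incompatible with the strand connectivity of $T_n$, or else forced to act non-trivially on a topological invariant — for instance the set of link components, the cusp shapes of the hyperbolic structure on $B^3-T_n$, or the symmetry group inherited from the building blocks — that no self-homeomorphism of the tangle complement can realize.

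Next I would construct $M_1$ and $M_2$ block-wise. The idea is to choose a finite-index subgroup $K$ of the peripheral fundamental group of $\partial(B^3-T_n)$ on which $\phi_*$ acts compatibly, and then to find finite-index subgroups $H_1, H_2 \leq \pi_1(B^3-T_n)$ whose peripheral intersections realize $K$ and $\phi_*(K)$ respectively. The most natural way to produce such $H_i$ is to assemble them from matched covers $\wt{S}$ of $S$ and $\wt{T}$ of $T$ — either the covers produced in previous work \cite{CheDe} or simple modifications thereof — and to glue them across $n$ internal spheres. The homeomorphism $\Phi\co M_1\to M_2$ is then built from the boundary lift of $\phi$ together with a compatible reassembly of the interior blocks.

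Finally, to extend to $S^3-L_{m+n}$ I would adjoin matching covers of the remaining $m$ copies of $T$ and of $\overline{S}$ to the sphere bounding $T_n$ inside each $M_i$, using that the lifted $\phi$ matches up there. The main obstacle will be the combinatorial compatibility in the third step: the block covers $\wt{S}, \wt{T}$ must be chosen so that $\phi$ lifts on the chosen boundary sphere, the internal gluings produce a consistent cover of $B^3-T_n$, and the resulting $\Phi$ does not descend — a failure of any one of these conditions collapses the construction.
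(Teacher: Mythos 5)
Your overall architecture --- lift the mutation to matched covers of the blocks $S$ and $T$, glue, and then adjoin covers of the remaining pieces to pass to $S^3-L_{m+n}$ --- is the strategy the paper uses for its \emph{second} main result, Theorem \ref{glue covers}, not for Theorem \ref{hidden extension}. For the latter the paper takes a much more economical route: Lemma \ref{Hn} produces a single Kleinian group $H_{2n}$ containing $\Delta_n$ (the group uniformizing $B^3-T_n$) with finite index \emph{and} containing the isometry $\sfm_1^{(n)}$ that induces $(1\,3)(2\,4)$ on the totally geodesic boundary. One then takes $\Omega_n$ to be the normal core of $\Delta_n$ in $H_{2n}$; since $\sfm_1^{(n)}$ normalizes $\Omega_n$ it induces a self-isometry of the single cover $C(\Omega_n)\to C(\Delta_n)$ that restricts to a lift of the mutation on a boundary component. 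This sidesteps entirely the ``combinatorial compatibility'' problem that you correctly flag as the main obstacle but do not resolve. That obstacle is genuinely hard: when the paper does carry out your block-wise plan in Section \ref{matching}, it needs the specific index-$5$ congruence subgroup of Lemma \ref{algebra} and the index-$11$ permutation representation of Lemma \ref{permarep} to make the boundary components of the covers of $M_S$ and $M_T$ match up, and nothing in your sketch indicates how covers taken ``from previous work or simple modifications thereof'' would be arranged to do this.

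The second gap is your treatment of the preliminary step, showing that $(1\,3)(2\,4)$ does not extend over $B^3-T_n$. This cannot be read off ``directly from the picture'': the permutation is compatible with the strand connectivity by construction (that is what makes it a mutation), and mutant links are notoriously difficult to distinguish --- they share hyperbolic volume, trace field, cusp shapes, and most classical invariants, so none of the invariants you list will separate $L_{m+n}$ from its mutant. The paper instead observes that an extension over $B^3-T_n$ would make $S^3-L_{m+n}$ homeomorphic, hence by Mostow--Prasad rigidity isometric, to its mutant along $S^{(n)}$, contradicting Theorem 2 of \cite{CheDe}; that theorem is a substantial result of the earlier paper and is the real content of this step. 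Your logical frame (once $\phi$ is known not to extend, any lift of it is automatically hidden) is correct, but both of the inputs it requires are missing from the proposal.
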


In particular, this gives the first proof that the $S^3-L_{m+n}$ have hidden symmetries.  Its heart is the fact that though $(1\,3)(2\,4)$ does not extend over $S^3-L_{m+n}$, it is represented by an isometry of the totally geodesic $\partial(B^3-T_n)$ that is induced by an isometry $\sfm_1^{(n)}$ of $\mathbb{H}^3$ in the \textit{commensurator} (see eg. \cite[p.~274]{NeumReid}) of the group $\Gamma_{m+n}$ uniformizing $S^3-L_{m+n}$.  Lemma \ref{Hn} asserts the analogous fact for the group $\Delta_n$ uniformizing $B^3-T_n$, which implies the other assertion of Theorem \ref{hidden extension}.  

In Section \ref{matching} we attack the same problem on the same examples, but from a different direction.  The idea in this section is to produce hidden symmetries without prior knowledge of an orbifold cover such as was used in Theorem \ref{hidden extension}. Instead we leverage the decomposition of $L_n$ into tangle complements, producing explicit hidden extensions of the mutation over covers of these and solving a gluing problem to piece them together to produce a hidden symmetry of $L_n$.  One nice byproduct of this approach is an explicit description of the hidden symmetry.  We show: 

\newcommand\GlueCovers{For each $n\in\mathbb{N}$ there is an $11$-sheeted cover $N_n\to B^3-T_n$ and a hidden extension $\Psi \co N_n \to N_n$ of the mutation $(1\,3)(2\,4)$ acting on $S^{(n)}-T_n$.  Moreover, for each $m\in\mathbb{N}$, $\Psi$ extends to a hidden symmetry of an $11$-sheeted cover of $S^3-L_{m+n}$ that contains $N_n$.}
\newtheorem*{GlueCvrsProp}{Theorem \ref{glue covers}}
\begin{GlueCvrsProp}\GlueCovers\end{GlueCvrsProp}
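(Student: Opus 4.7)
The plan is to decompose $B^3 - T_n$ along the $4$-punctured spheres that separate its constituent tangle complements, build compatible $11$-sheeted covers of each piece carrying a lift of the mutation $(1\,3)(2\,4)$, and glue these together to produce $N_n$ and the self-homeomorphism $\Psi$. The extension to $S^3 - L_{m+n}$ will then be obtained by the analogous gluing across the remaining pieces of the decomposition.

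The essential building blocks are an $11$-sheeted cover $\wt{Y} \to B^3 - S$ and an $11$-sheeted cover $\wt{Z} \to S^2 \times I - T$, each equipped with a self-homeomorphism lifting the mutation on every boundary $4$-punctured sphere. These correspond to index-$11$ subgroups of the respective tangle fundamental groups; the value $11$ is dictated by the requirement that the induced permutation representations on the boundary peripheral subgroups be conjugate in a controlled way on both sides of each interface. I would identify such subgroups by computing with explicit presentations of $\pi_1(B^3-S)$ and $\pi_1(S^2 \times I - T)$, searching for index-$11$ transitive permutation representations whose restriction to the boundary groups is normalized by the element representing the mutation.

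Given $\wt{Y}$ and $\wt{Z}$, one glues $\wt{Y}$ to $n$ copies of $\wt{Z}$ along their matching $4$-punctured sphere boundaries. This requires the boundary covers on either side of an interface to be identifiable as the same cover of the $4$-punctured sphere, which amounts to a compatibility check on the induced peripheral permutation representations. Once the matchings are arranged, the local lifts of the mutation assemble into a global self-homeomorphism $\Psi \co N_n \to N_n$. Non-descent of $\Psi$ then follows because the index-$11$ subgroup defining $N_n$ is not normalized by the commensurator element representing the mutation---a fact in the same vein as Lemma~\ref{Hn}---so $\Psi$ is a genuine hidden extension. For the moreover clause, I would glue $m$ additional copies of $\wt{Z}$ onto $N_n$ and cap off with an $11$-sheeted cover of $B^3 - \overline{S}$ obtained as the mirror of $\wt{Y}$; the same matching analysis yields an $11$-sheeted cover of $S^3 - L_{m+n}$ containing $N_n$, and $\Psi$ extends across the new gluings using the mutation lifts already built into $\wt{Z}$ and the mirror cover.

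The main obstacle is the sheet-matching problem across all interfaces simultaneously: the index-$11$ subgroups defining $\wt{Y}$ and $\wt{Z}$ must be chosen so that the induced permutations on the boundary $4$-punctured spheres intertwine with the mutation at every interface, uniformly in $n$ and $m$. Pinning down the right subgroups is delicate; this is what forces the value $11$ and presumably proceeds via a careful analysis of homomorphisms from the boundary group into symmetric groups that extend over both tangle groups. Once the matching is arranged, non-descent of $\Psi$ and its extension across the added pieces should follow by direct verification in the tangle groups.
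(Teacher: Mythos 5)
Your architecture is the paper's: decompose along the separating $4$-punctured spheres, build degree-$11$ covers of the pieces carrying lifts of the mutation, glue, and cap off with mirror pieces to reach $S^3-L_{m+n}$. But the proposal stops exactly where the work begins, and the two places where you defer to a future computation conceal assumptions that the actual construction does not satisfy. First, you posit \emph{connected} $11$-sheeted covers $\wt{Y}\to B^3-S$ and $\wt{Z}\to (S^2\times I)-T$ corresponding to index-$11$ subgroups, and you treat the cover over each interface sphere as a single cover of the $4$-punctured sphere to be ``identified'' with its counterpart. In the paper the degree-$11$ cover of the $T$-piece (the double $\wt{M}_T$ of $C(\Omega_{T_0})$, with $\Omega_{T_0}=\phi^{-1}(\phi\Lambda)$ from Lemma \ref{permarep}) restricts over $\partial_-M_T$ to \emph{three} components, of degrees $1$, $5$, and $5$ (Lemma \ref{the T_0 cover}). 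Consequently the degree-$11$ cover of $B^3-S$ that matches it is \emph{disconnected}: $\wt{M}_S=M_S\sqcup\wt{N}_a\sqcup\wt{N}_b$ with $\wt{N}_a,\wt{N}_b$ copies of the $5$-fold cover $C(\Omega_0)$ coming from reduction mod $(1+2i)$ (Lemma \ref{algebra}, Corollary \ref{over M_S}). Your ``search for index-$11$ transitive permutation representations'' of $\pi_1(B^3-S)$ with matching boundary behavior is looking in the wrong place; the matching problem is solved by abandoning connectedness of that piece, and by the arithmetic fact that $\Omega_0$ is \emph{normal} in $H_0$ (so that the conjugated mutations $\g^{\pm1}\sfm_1\g^{\mp1}$ still act on the extra components $\wt{N}_a,\wt{N}_b$).

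Second, the mechanism that produces the lifts at all is missing. Since $\sfm_1$ does not lie in $\Gamma_{T_0}$ or $\Delta_0$, one cannot phrase ``normalized by the element representing the mutation'' inside the tangle groups; the paper works inside the orbifold groups $H_{T_0}$ and $H_0$ of Lemmas \ref{the orbifold piece} and \ref{the other piece}, which contain both the tangle groups and $\sfm_1$, and extracts $\Omega_{T_0}$ and $\Omega_0$ as preimages of subgroups under explicit finite quotients ($\phi\co H_{T_0}\to S_{12}$ with image of order $660$, and $H_0\to\mathrm{PSL}_2(\Z/5\Z)$). This is also where $11$ actually comes from ($\phi(\Gamma_{T_0})\cong\Z_{11}\rtimes\Z_5$ and $[\phi(\Gamma_{T_0}):\phi(\Lambda)]=11$), not from a boundary-conjugacy constraint as you suggest. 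Finally, your non-descent criterion --- that the index-$11$ subgroup defining $N_n$ is not normalized by the commensurator element --- is not the right test: a symmetry of a cover can descend without the covering subgroup being normal. Descent fails here because the mutation itself does not extend over $B^3-T_n$ (equivalently $\sfm_1$ does not normalize $\Gamma_T$), which the paper establishes via Mostow--Prasad rigidity and the non-isometry of mutant links; some such input is required and is absent from your argument.
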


Given that we are motivated by hidden symmetries of knot complements, the following question is natural:

\begin{question} Is there a knot $K$ in $S^3$ and a hidden symmetry of $S^3-K$ that is a hidden extension of a symmetry of some surface in $S^3-K$?\end{question}

In fact as the referee has pointed out, one might ask this about the known examples with hidden symmetries.  While it seems unlikely that the figure-eight knot complement has hidden extensions, given the classification of incompressible surfaces there (see \cite[\S 4.10]{Th_notes}), we have no corresponding conjecture about the dodecahedral knot complements.  This would be interesting to know.

Another tantalizing possibility arises from the observation that each tangle $T_n$ also lies in many knots in $S^3$ which are distinct from the 3 known examples of knots with hidden symetries.  If an analog of Theorem \ref{glue covers} could be proved for any such knot it would give a new example whose complement has hidden symmetries.  We have ruled out many possibilities using a criterion given in \cite[Corollary 2.2]{ReidWalsh}: a knot complement with hidden symmetries has cusp field $\mathbb{Q}(i)$ or $\mathbb{Q}(\sqrt{-3})$.  This condition can be easily checked with with SnapPy \cite{SnapPy} and Snap (see \cite{CGHN}).  We suspect that there is a reason the $T_n$ cannot lie in knots whose complements have hidden symmetries, and intend to study this further.  

We conclude the introduction with two related problems.
\begin{problem}  Classify tangles in the ball with complements whose boundary has a symmetry with hidden extension.\end{problem}

\begin{problem}  Given a tangle $T$ in the ball $B^3$, and a symmetry $\phi$ of $\partial B^3-T$ with a hidden extension across a cover of $B^3-T$, classify the links $L$ containing $T$ such that the hidden extension of $\phi$ extends to a cover of $S^3-L$.\end{problem}

%%%%%%%%%%%%%%%%%%
\section{Existence of hidden extensions}\label{extension}

The goal of Section \ref{tanglerator} is to describe the tangle complements $B^3-T_n$ from both the topological and geometric perspectives, by collecting relevant definitions and results scattered throughout \cite{CheDe} and re-assembling them here in a more helpful order.  In this sub-section we merely summarize geometric details, referring the interested reader to \cite{CheDe} for proofs.  In Section \ref{prooferator} we prove Theorem \ref{hidden extension}.

\begin{figure}
\centering
\setlength{\unitlength}{.1in}
\begin{picture}(30,18)
\put(0,2){\includegraphics[width=2.8in]{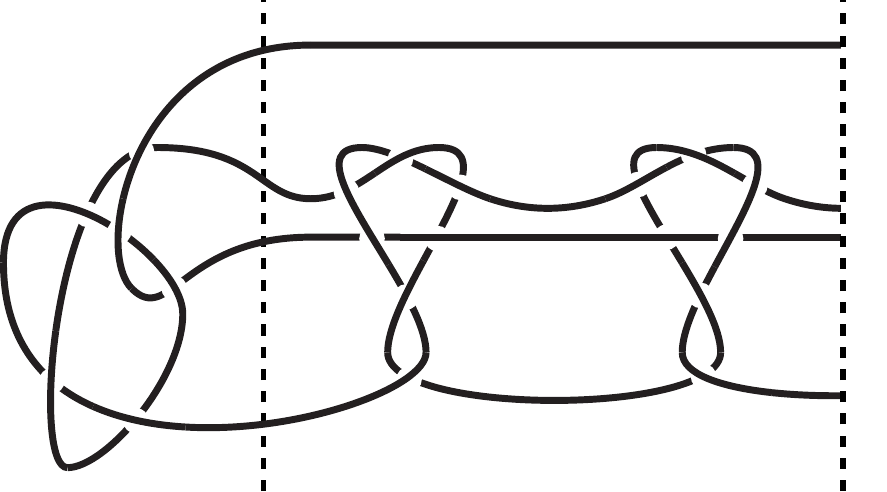}}
\put(8.8,14.8){$2$}
\put(8.8,12){$3$}
\put(8.8,8.8){$4$}
\put(8.8,4.7){$1$}
\put(27.8,16){$2$}
\put(27.8,11){$3$}
\put(27.8,9){$4$}
\put(27.8,4.5){$1$}
\put(7.5,0){$S^{(0)}$}
\put(26,0){$S^{(1)}$}
\put(1,15){$S$}
\put(16.5,2){$T$}
\end{picture}
\caption{The tangle $T_1\subset B^3$ can be decomposed along a sphere $S^{(0)}$ into a tangle $S \subset B^3$ and a tangle $T\subset S^2 \times I$.}
\label{SUT} \label{SandT}
\end{figure}

%%%%%%%%%%%%%%%%%%%%%%%%%
\subsection{The topology and geometry of $T_n$}\label{tanglerator} The solid lines in Figure \ref{SandT} describe a two string tangle $T_1 \subset B^3$.  $\bound B^3$ is shown as a dotted line labeled $S^{(1)}$.  There is an additional sphere $S^{(0)}$ shown in the figure.  If we cut $(B^3,T_1)$ along $S^{(0)}$ we obtain a pair of tangles $(B^3,S)$ and $(S^2 \times I, T)$.  Orienting $I$ so that $S^{(0)} = S^2\times\{0\}$, we let $\partial_- T = T\cap S^{(0)}$ and $\partial_+ T = T\cap S^{(1)}$.

Let $r_T \co (S^2 \times I,T) \to (S^2\times I,T)$ be the reflection homeomorphism visible in Figure 1, and let $T_0$ be the subtangle of $T$ that lies to the left of the fixed point set of $r_T$.  That is, $T_0 = T\cap S^2\times[0,1/2]$.  Reparametrizing the underlying interval, we may also regard $T_0$ as a tangle in $S^2\times I$.

Proposition \ref{omnidef} below, which combines parts of Propositions 2.7, 2.8, and 3.7 of \cite{CheDe}, introduces geometric models for the complements of the tangles $S$, $T_0$ and $T$.  There and henceforth, we work with the upper half space model for $\mathbb{H}^3$ and use the standard representation of $\text{Isom}(\mathbb{H}^3)$ as a $\Z_2$ extension of $\mathrm{PSL}_2(\mathbb{C})$.  If ${\sf d} \in \mathrm{PSL}_2(\mathbb{C})$, we write $\overline{{\sf d}}$ for the matrix whose entries are the complex conjugates of the entries of ${\sf d}$.  When we apply this operation to each element of a subgroup $\Gamma < \mathrm{PSL}_2(\mathbb{C})$ we obtain a subgroup denoted by $\overline{\Gamma}$. 

For a Kleinian group $\Gamma$, we denote the convex core of $\mathbb{H}^3/\Gamma$ as $C(\Gamma)$.  We will use the term {\em natural map} as in \cite{CheDe} (see below Definition 3.1 there) to refer to the restriction to $C(\Lambda)$ of the orbifold covering map $\mathbb{H}^3/\Lambda \to \mathbb{H}^3/\Gamma$, for $\Lambda<\Gamma$.  Because the limit set of $\Gamma$ contains that of $\Lambda$, the natural map takes $C(\Lambda)$ into $C(\Gamma)$.

The geometric models for $B^3-S$ and $(S^2\times I)-T_0$ described in parts (\ref{M_S}) and (\ref{M_T0}) of Proposition \ref{omnidef} are hyperbolic $3$-manifolds with totally geodesic boundary produced by pairing certain faces of the right-angled ideal octahedron and cuboctahedron, respectively, but they are described in the Proposition as convex cores of the quotients of $\mathbb{H}^3$ by the groups generated by the face-pairing isometries.  The equivalence of these two forms of description is proved in Lemma 2.1 of \cite{CheDe}.

\begin{prop}\label{omnidef}
\begin{enumerate}  \item\label{M_S}  For $\s= \left(\begin{smallmatrix} 1 & 0 \\ -1 & 1 \end{smallmatrix}\right)$ and $\t= \left(\begin{smallmatrix} 2i & 2-i \\ i & 1-i \end{smallmatrix}\right)$, $\Delta_0 = \langle\s,\t\rangle$ is a Kleinian group, and there is a homeomorphism $f_S\co M_S \doteq B^3-S\to C(\Delta_0)$.%\begin{align*}
%  \s &= \left(\begin{smallmatrix} 1 & 0 \\ -1 & 1 \end{smallmatrix}\right) & 
%  \t &= \left(\begin{smallmatrix} 2i & 2-i \\ i & 1-i \end{smallmatrix}\right) \end{align*}
\item\label{M_T0}  For $\f$, $\g$ and $\h$ below, $\Gamma_{T_0} = \langle \f,\g,\h \rangle$ is a Kleinian group, and there is a homeomorphism $f_{T_0}\co M_{T_0} \doteq (S^2\times I)-T_0\to C(\Gamma_{T_0})$.\begin{align*}
  \f\ &=\ \left( \begin{smallmatrix} 1 & 0 \\ -1 & 1 \end{smallmatrix} \right) &
  \g\ &=\ \left(\begin{smallmatrix} -1+i\sqrt{2} &  1-2i\sqrt{2} \\ -2 & 3-i\sqrt{2} \end{smallmatrix}\right) &
  \h\ &=\ \left(\begin{smallmatrix} 2i\sqrt{2} & -3-i\sqrt{2} \\ -3+i\sqrt{2} & -3i\sqrt{2} \end{smallmatrix}\right)\end{align*}
\item\label{T-ish}  For $\sfc= \left(\begin{smallmatrix} 1 & i\sqrt{2} \\ 0 & 1 \end{smallmatrix}\right)$, $\Gamma_T = \left\langle \Gamma_{T_0},\sfc^{-2}\overline{\Gamma}_{T_0}\sfc^2\right\rangle$ is a Kleinian group, and there is a homeomorphism $f_T\co M_T \doteq (S^2\times I)-T\to C(\Gamma_T)$ satisfying:\begin{itemize}
	\item composing the inclusion $M_{T_0}\to M_T$ with $f_T$ yields $f_{T_0}$; and
	\item for $r_T$ as above, $f_T\circ r_T\circ f_T^{-1}$ is induced by $x\mapsto \sfc^{-2}\bar{x}\sfc^2$.\end{itemize}
\item The intersection $\Delta_0 \cap \Gamma_T$ is a Fuchsian group $\Lambda$ stabilizing the hyperplane $\calh=\mathbb{R} \times (0,\infty)$ of $\mathbb{H}^3$.  This is the intersection of the convex hulls of the limit sets of $\Delta_0$ and $\Gamma_T$, and the natural maps from $\calh/\Lambda$ to $C(\Delta_0)$ and $C(\Gamma_T)$ map to totally geodesic boundary components.
\item\label{naturelle} The image of the natural map $\calh/\Lambda\to C(\Gamma_T)$ is the image of $\bound_-M_T\doteq (S^2 \times \{0\})-T$ under $f_T$.  The same holds with each instance of $T$ here replaced by $T_0$.

For the homeomorphism $j \co (\bound B^3, \bound S) \to (S^2 \times \{0\}, \bound_- T)$ such that $(B^3,S) \cup_j (S^2 \times I,T) \cong (B^3, T_1)$, $f_T\circ j\circ f_S^{-1}\co \partial C(\Delta_0)\to C(\Gamma_T)$ factors through $\calh/\Lambda$ as the composition of a natural map with the inverse of another.\end{enumerate}
\end{prop}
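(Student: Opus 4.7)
The plan is largely to unpack, translate, and cite the corresponding assertions of \cite{CheDe}, since Proposition \ref{omnidef} is explicitly described as combining Propositions 2.7, 2.8, and 3.7 there. What needs to be done is to identify each piece in the source, check that the matrix notation and normalizations agree, and then stitch the parts together into a single self-contained statement suitable for use in Section \ref{prooferator}.

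For (\ref{M_S}) and (\ref{M_T0}), the strategy is to identify $\Delta_0$ and $\Gamma_{T_0}$ as face-pairing groups of the right-angled ideal octahedron and cuboctahedron, respectively, with the specific pairings used in \cite{CheDe}. Once this identification is in place, Lemma 2.1 of \cite{CheDe} lets us equate the face-pairing quotient (a hyperbolic manifold with totally geodesic boundary) with the convex core of the Kleinian group generated by the face-pairings. The homeomorphisms $f_S$ and $f_{T_0}$ are then the natural identifications of $B^3-S$ and $(S^2\times I)-T_0$ with these face-pairing quotients, obtained by matching the combinatorics of the tangle complements to the pairings described in Propositions 2.7 and 2.8 of \cite{CheDe}.

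For (\ref{T-ish}), the key idea is that $(S^2\times I, T)$ is the double of $(S^2\times I, T_0)$ across the reflection hyperplane fixed by $r_T$, so its fundamental group is obtained from $\Gamma_{T_0}$ by adjoining the reflected copy. The transformation $x\mapsto \sfc^{-2}\overline{x}\sfc^2$ implements exactly this reflection: complex conjugation reflects across the real hyperplane $\calh$, and the conjugation by $\sfc^2$ translates so that the reflection plane lands on the appropriate face of the cuboctahedron. The two bulleted compatibility properties for $f_T$ then hold by construction of the gluing, and this is precisely Proposition 3.7 of \cite{CheDe}.

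For (4) and (\ref{naturelle}), I would identify $\calh=\mathbb{R}\times(0,\infty)$ explicitly as the common totally geodesic boundary hyperplane shared by the octahedron in part (\ref{M_S}) and the cuboctahedron in part (\ref{M_T0}), and verify $\Delta_0\cap\Gamma_T=\Lambda$ by noting that any element preserving $\calh$ in either group arises from the stabilizer of the corresponding boundary face. The gluing assertion in (\ref{naturelle}) then reduces to the observation that the geometric structures on $B^3-S$ and on $(S^2\times I)-T$ induce the same hyperbolic structure on $\calh/\Lambda$, so the topological gluing $j$ factors through this Fuchsian quotient as asserted. The one step that is not purely citation is sanity-checking the matrix-level data for $\s,\t,\f,\g,\h,\sfc$ against \cite{CheDe}, but this is a mechanical verification rather than an obstacle; the substantive content is already established there.
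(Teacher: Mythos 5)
Your proposal matches the paper's treatment: Proposition \ref{omnidef} is presented there without a new proof, as a restatement combining Propositions 2.7, 2.8, and 3.7 of \cite{CheDe}, with Lemma 2.1 of \cite{CheDe} supplying exactly the equivalence you invoke between the face-pairing quotients of the ideal octahedron and cuboctahedron and the convex cores $C(\Delta_0)$, $C(\Gamma_{T_0})$. Your roadmap for assembling and cross-checking those citations is the same route the paper takes.
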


We now turn back to topology and give an inductive definition of the tangles $T_n$, assembling $(B^3,T_n)$ from a single copy of $(B^3,S)$ and $n$ of $(S^2\times I, T)$ for each $n\in\mathbb{N}$, using  $T_1$ as pictured in Figure \ref{SandT} as the base case.  Numbering the points of $(S^{(0)},\bound T_1)$ and $(S^{(1)},\bound T_1)$ as shown in the figure, let $(S^2 \times \{1\},\bound_+ T)$, $(S^2 \times \{0\},\bound_- T)$, and $(\bound B^3,\bound S)$ inherit numberings from their inclusions to these spheres.  Note that the resulting numbering of $(S^2\times\partial I,\bound T)$ is $r_T$-invariant.

Now for $n>1$, assume for $1\leq k< n$ that tangles $T_k \subset B^3$ with labeled endpoints are defined, and, for $k>1$, inclusions $(B^3,T_{k-1})\hookrightarrow (B^3,T_k)$ and $\iota_k \co (S^2\times I,T) \to (B^3,T_{k})$, such that:\begin{itemize}
	\item $(B^3,T_k) = (B^3\cup \iota_k(S^2\times I),T_{k-1}\cup\iota_k(T))$; 
	\item $\iota_k$ preserves labels on $\partial_+T$; and
	\item the included image of $B^3$ intersects $i_k(S^2\times I)$ in a sphere $S^{(k)}$, with $(S^{(k)},S^{(k)}\cap T_k) =  (\bound B^3,\bound T_{k-1}) = \iota_k(S^2\times \{1\},\partial_+ T)$.  
\end{itemize}
Define $T_{n} \subset B^3$ as the quotient of the disjoint union $(B^3,T_n) \sqcup (S^2 \times I,T)$ by identifying $\iota_{n-1}(x,1)$ to $(x,0)$ for each $x \in S^2$; let the inclusion of $(B^3,T_{n-1})$ and $\iota_n\co (S^2\times I,T)\to (B^3,T_n)$ be induced by the respective inclusions into the disjoint union; and label the endpoints of $T_n$ coherently with $T\cap (S^2\times\{1\})$ using $\iota_n$.  It is clear by construction that the inductive hypothesis applies to $(B^3,T_n)$.

%Let $S^{(n)}$ be the image of $S^2 \times \{1\}$ under $\iota_{n}$.  Number the points of $(\bound B^3,\bound T_n)$ by pushing labels forward with $\iota_n$.  For $0 \leq i <n$, the inclusion $\iota_{n-1}$ gives spheres $S^{(i)}$ in $B^3$ which together cut $(B^3, T_n)$ into one copy of $(B^3, S)$ and $n$ copies of $(S^2\times I, T)$.  

Having topologically described the $T_n$, our next order of business is to give geometric models for their complements; that is to describe hyperbolic manifolds with totally geodesic boundary homeomorphic to the $B^3-T_n$. In parallel with our topological description of $T_n$, these are assembled from copies of the geometric models described in Proposition \ref{omnidef}.  To this end, we define:
\begin{align*}
\Gamma_T^{(j)} &= \sfc^{-2(j-1)} \Gamma_{T} \sfc^{2(j-1)} & \Lambda^{(j)} &= \c^{-2j}\Lambda \c^{2j} & F^{(j)} &= \c^{-2j}(\calh)/\Lambda^{(j)}
\end{align*}
Note for each $j$ that $C(\Gamma_T^{(j)})$ is isometric to $C(\Gamma_T)$, so it is just a copy of $M_T$, and $F^{(j)}$ is isometric to $\calh/\Lambda$.  Now with $\Delta_0$ as in Proposition \ref{omnidef}, for $n \geq 1$ let 
\[ \Delta_n = \left\langle \Delta_0, \Gamma_T^{(1)}, \hdots,\Gamma_T^{(n)}\right\rangle. \]
(In \cite{CheDe}, $\Delta_0$ is denoted as $\Gamma_S$ and $\Delta_n$ as $\Gamma_-^{(n)}$.)  The consequence of Propositions 3.10 and 3.12 of \cite{CheDe} below shows that $C(\Delta_n)$ is a geometric model for $B^3-T_n$.
 
\begin{prop}\label{tangle structure} 
For each $n\in\mathbb{N}$ there is a homeomorphism $f_n \co B^3-T_n \to C(\Delta_n)$.  Moreover, the natural map $C(\Delta_{n-1}) \to C(\Delta_{n})$ is an isometric embedding, and there is another, $\iota_n \co C(\Gamma_T) \to C(\Delta_n)$ factoring through an isometry $C(\Gamma_T)\to C(\Gamma_T^{(i)})$, such that for $n>1$ the following diagrams commute.
\[ \xymatrix{
B^3-T_{n-1} \ar[r]^{f_{n-1}} \ar[d] & C(\Delta_{n-1}) \ar[d] & M_T \ar[r]^{f_T} \ar[d]_{\iota_n} & C(\Gamma_T) \ar[d]^{\iota_n} \\
B^3-T_{n} \ar[r]_{f_{n}} & C(\Delta_{n}) & B^3-T_n \ar[r]_{f_n} & C(\Delta_n)
} \]
This also holds for $n=1$, taking $f_0 \doteq f_S\co M_S\to C(\Delta_0)$ at the top left.

The natural map $F^{(j)} \to f_n(S^{(j)}-\bound T_j)$ is an isometry onto a totally geodesic surface in $C(\Delta_n)$ when $0 \leq j \leq n$; in particular, $F^{(n)}$ is isometric to $\partial C(\Delta_n)$.\end{prop}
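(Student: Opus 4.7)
The plan is to induct on $n$, geometrically mirroring the topological construction of $(B^3, T_n)$ from $(B^3, T_{n-1})$ by attaching a copy of $(S^2 \times I, T)$. The base case $n = 0$ is Proposition \ref{omnidef}(1), taking $f_0 = f_S$; part (4) of that proposition identifies $\partial C(\Delta_0)$ with $F^{(0)} = \calh/\Lambda$, which is the correct totally geodesic analog of $\partial B^3 - \partial S$.

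For the inductive step, I first observe that conjugation by $\c^{-2(n-1)}$ is an isometry of $\mathbb{H}^3$ carrying $C(\Gamma_T)$ isometrically to $C(\Gamma_T^{(n)})$, and sending the two totally geodesic boundary components of $C(\Gamma_T)$ — arising respectively from $\calh/\Lambda$ and from $\c^{-2}(\calh)/\c^{-2}\Lambda\c^2$ as described in Proposition \ref{omnidef}(3)--(4) — to $F^{(n-1)}$ and $F^{(n)}$. By inductive hypothesis $\partial C(\Delta_{n-1}) = F^{(n-1)}$, so I can glue $C(\Gamma_T^{(n)})$ to $C(\Delta_{n-1})$ along this common surface, matching the topological attachment $\iota_n$ using Proposition \ref{omnidef}(5). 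The homeomorphism $f_n$ is then assembled from $f_{n-1}$ and the $\c^{-2(n-1)}$-conjugate of $f_T$, and the two commutative diagrams are immediate from this assembly: the left one is the inclusion of the first factor, and the right one factors $\iota_n$ through the conjugation isometry $C(\Gamma_T) \to C(\Gamma_T^{(n)})$ followed by inclusion.

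The technical heart of the argument, which I invoke from Propositions 3.10 and 3.12 of \cite{CheDe}, is that the abstractly defined Kleinian group $\Delta_n = \langle \Delta_{n-1}, \Gamma_T^{(n)} \rangle$ actually realizes this gluing geometrically: that is, $\Delta_{n-1} \cap \Gamma_T^{(n)} = \Lambda^{(n-1)}$, no accidental identifications arise in the limit set, and $C(\Delta_n)$ is the isometric union of $C(\Delta_{n-1})$ and $C(\Gamma_T^{(n)})$ along $F^{(n-1)}$. Verifying this requires careful control of Ford or Dirichlet domains and limit sets under the iterated amalgamation, and I expect it to be the main obstacle; fortunately the work has been done in \cite{CheDe}. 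With this fact in hand, the isometric-embedding claim for $C(\Delta_{n-1}) \to C(\Delta_n)$ is automatic.

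Finally, the claim about the surfaces $F^{(j)}$ is propagated by the induction. At stage $n-1$, the sphere $S^{(n-1)}$ coincides under $f_{n-1}$ with the boundary $F^{(n-1)}$; when we amalgamate $\Gamma_T^{(n)}$, this surface becomes embedded as a separating totally geodesic surface in $C(\Delta_n)$, while the surface $F^{(n)}$ arising from the far boundary of $C(\Gamma_T^{(n)})$ emerges as $\partial C(\Delta_n)$ and matches $S^{(n)}$ under $f_n$. The intermediate surfaces $F^{(j)}$ for $0 \le j < n-1$ are untouched by the attachment and remain embedded totally geodesic surfaces corresponding under $f_n$ to the spheres $S^{(j)}$, completing the inductive step.
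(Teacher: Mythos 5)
Your proposal is correct and matches the paper's approach: the paper gives no independent proof of Proposition \ref{tangle structure}, stating it as a direct consequence of Propositions 3.10 and 3.12 of \cite{CheDe}, which is exactly where you place the technical heart (that $\Delta_n=\langle\Delta_{n-1},\Gamma_T^{(n)}\rangle$ realizes the gluing of convex cores along $F^{(n-1)}$ with no accidental identifications). Your inductive scaffolding around that citation, including the identification of the boundary components of $C(\Gamma_T^{(n)})$ with $F^{(n-1)}$ and $F^{(n)}$ via conjugation by $\sfc^{-2(n-1)}$, is a faithful unpacking of what those cited results provide.
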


Our final task in comprehensively describing the $T_n$ is to translate the tangle endpoint labeling to the geometric setting, yielding a labeling of cusps of $F^{(n)}$, or equivalently, of parabolic conjugacy classes in $\Lambda^{(n)}$.  We begin below by listing representatives for the parabolic conjugacy classes in $\Lambda$ as words in $\Gamma_S$ and $\Gamma_T$.
\begin{align*}
 & \p_1 = \s^{-1} = \f^{-1} \\
 & \p_2 = \s\t\s\t^{-2} = \f\g^{-1}\f^{-1}\h^{-1}\g \\
 & \p_3 = (\t\s\t)\s^{-1}(\t\s\t)^{-1} = (\h^{-1}\f\g)^{-1}\g^{-1}(\h^{-1}\f\g), \\
 & \p_4 = \p_1\p_2\p_3^{-1} 
 \end{align*}
 A calculation shows that 
\begin{align*}
\p_1&=\left( \begin{smallmatrix} 1&0\\1&1\end{smallmatrix} \right) &
\p_2&=\left( \begin{smallmatrix} -1&5\\0&-1\end{smallmatrix} \right) &
\p_3&=\left( \begin{smallmatrix} -14&25\\-9&16\end{smallmatrix} \right) &
\p_4&=\left( \begin{smallmatrix} 29&-45\\20&-31\end{smallmatrix} \right) 
\end{align*}
From Lemma 2.4 of \cite{CheDe} we obtain the next proposition.
\begin{prop} \label{boundary} %\label{surface}  
For any $n\in\mathbb{N}$, $j\leq n$, and $k\in\{1,2,3,4\}$, the parabolic conjugacy class in $\Lambda^{(j)}$ which corresponds to the point labeled $k$ in $S^{(j)}$ is represented by $\p_k^{(j)} =\sfc^{-2j}\p_k\sfc^{2j}$. Also $\Lambda^{(j)}$ is generated by any three of the $\p_k^{(j)}$'s.
\end{prop}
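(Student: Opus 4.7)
The plan is to induct on $j$, using Lemma 2.4 of \cite{CheDe} for the base case $j = 0$ and the explicit realization of $r_T$ from Proposition \ref{omnidef}(\ref{T-ish}) for the inductive step. In the base case, $\p_k^{(0)} = \p_k$ by definition, and Lemma 2.4 of \cite{CheDe}, via the identification of $F^{(0)} = \calh/\Lambda$ with $\partial C(\Delta_0) \cong \partial B^3 - S$ from Proposition \ref{omnidef}(\ref{naturelle}), matches the puncture labeled $k$ on $S^{(0)} = \partial B^3$ with the parabolic conjugacy class of $\p_k$ in $\Lambda$.

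For the inductive step, suppose the claim holds for $j - 1$ with $j \geq 1$. The inductive definition of $T_n$ transfers the labeling on $S^{(j)} = \iota_j(S^2 \times \{1\})$ from that on $\partial_+ T$, which in turn is the $r_T$-image of the labeling on $\partial_- T$. In the base model $C(\Gamma_T)$, the label-$k$ puncture of $\partial_- T$ corresponds to the conjugacy class of $\p_k$; applying the realization $g \mapsto \c^{-2}\bar{g}\c^2$ of $r_T$, the label-$k$ puncture of $\partial_+ T$ corresponds to the class of $\c^{-2}\bar{\p_k}\c^2$ in $\c^{-2}\bar{\Lambda}\c^2 = \Lambda^{(1)}$. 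Since each $\p_k$ is a real matrix ($\Lambda$ stabilizes $\mathbb{R} \cup \{\infty\}$), this simplifies to $\p_k^{(1)} = \c^{-2}\p_k\c^2$. Finally, the $j$-th copy of $T$ in $C(\Delta_n)$ differs from the base by conjugation by $\c^{-2(j-1)}$ (cf.\ Proposition \ref{tangle structure}), and applying this conjugation yields $\p_k^{(j)} = \c^{-2j}\p_k\c^{2j}$ as the required representative in $\Lambda^{(j)}$.

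The second assertion follows because $F^{(j)}$, being isometric to $\calh/\Lambda$, is a 4-punctured sphere, so $\Lambda^{(j)} \cong \pi_1(F^{(j)})$ is free of rank 3 with four peripheral conjugacy classes subject to the relation that their product (with consistent orientations) is trivial. A direct matrix computation confirms $\p_1 \p_2 \p_3^{-1} \p_4^{-1} = 1$ in $\Lambda$, so the analogous relation holds in $\Lambda^{(j)}$, and the first assertion identifies the $\p_k^{(j)}$ with the four distinct peripheral classes; hence any three of them generate.

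The main obstacle is the conjugation bookkeeping in the inductive step, which requires careful tracking of the isometric embeddings of Proposition \ref{tangle structure}. The essential simplification making this tractable is the observation $\bar{\p_k} = \p_k$, which removes the complex-conjugation bar in the $r_T$-action and reflects the fact that $\Lambda$ is Fuchsian with respect to the real axis.
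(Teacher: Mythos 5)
Your route is essentially the paper's: the paper obtains this proposition by citing Lemma 2.4 of \cite{CheDe} for the base sphere and then conjugating by $\c^{-2j}$, which is exactly the structure of your induction. Your first assertion's argument is sound, and the key mechanism you isolate --- that $r_T$ is realized by $x\mapsto \c^{-2}\bar{x}\c^{2}$, that the endpoint numbering is $r_T$-invariant, and that $\overline{\p_k}=\p_k$ because $\Lambda$ stabilizes $\widehat{\mathbb{R}}$ --- is precisely the bookkeeping the paper leaves implicit.

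The one genuinely incomplete step is the generation claim. From the facts that the $\p_k^{(j)}$ represent the four peripheral conjugacy classes of the four-punctured sphere $F^{(j)}$ and satisfy $\p_4=\p_1\p_2\p_3^{-1}$, it does not follow that any three of them generate $\Lambda^{(j)}$: arbitrary representatives of the peripheral conjugacy classes need not form a standard generating system for a free group, so ``hence any three of them generate'' is a non sequitur as written. The relation does correctly reduce the problem to showing that one particular triple, say $\p_1,\p_2,\p_3$, generates $\Lambda$, but that requires extra input. Either cite \cite{CheDe}, Lemma 2.4 for it directly (the paper attributes the generation statement to that lemma), or argue as follows: $H=\langle \p_1,\p_2,\p_3\rangle$ has rank at most $3$ and, since $\p_4\in H$, contains primitive parabolics over all four cusps of $\calh/\Lambda$; the cover of $\calh/\Lambda$ corresponding to $H$ therefore has at least four ends, each a cusp covering with degree one, while a surface with free fundamental group of rank $r\le 3$ has at most $r+1\le 4$ ends. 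Hence that cover is a four-cusped genus-zero surface with no funnels, so it has finite area $4\pi$ equal to that of $\calh/\Lambda$, and the covering has degree one, i.e.\ $H=\Lambda$. With that supplied, your proof is correct.
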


We finish by giving a geometric model for the mutation with a hidden extension.  The result below follows from Proposition \ref{boundary} above and Lemma 5.5 of \cite{CheDe}.  

\begin{lemma}\label{mutator}  Let $\sfm_1 = \left(\begin{smallmatrix} -3 & 5 \\ -2 & 3 \end{smallmatrix}\right)$.  For each $n\geq 0$, $\sfm_1^{(n)}\doteq \sfc^{-2n}\sfm_1\sfc^{2n}$ normalizes $\Lambda^{(n)}$ and induces a cycle representation $(1\,3)(2\,4)$ on the the four cusps of $F^{(n)}$, where each cusp is numbered according to its corresponding parabolic isometry $\p_j^{(n)}$.
\end{lemma}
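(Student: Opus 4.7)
The plan is first to reduce to the case $n=0$ by conjugation, and then to verify the two assertions directly for $\Lambda$, $\sfm_1$, and the $\p_k$. Since $\sfm_1^{(n)}=\sfc^{-2n}\sfm_1\sfc^{2n}$, $\Lambda^{(n)}=\sfc^{-2n}\Lambda\sfc^{2n}$, and $\p_k^{(n)}=\sfc^{-2n}\p_k\sfc^{2n}$ for every $k$, conjugation by $\sfc^{-2n}$ is an isomorphism of subgroups of $\mathrm{PSL}_2(\C)$ that simultaneously carries $\Lambda$ onto $\Lambda^{(n)}$, each $\p_k$ to $\p_k^{(n)}$, and the conjugation action of $\sfm_1$ to that of $\sfm_1^{(n)}$. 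In particular $\sfm_1^{(n)}$ normalizes $\Lambda^{(n)}$ if and only if $\sfm_1$ normalizes $\Lambda$, and, once cusps are labeled by the $\p_k^{(n)}$, the induced cusp permutations coincide. So the whole lemma reduces to the $n=0$ case.

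For normalization at $n=0$, Proposition \ref{boundary} says $\Lambda$ is generated by any three of $\p_1,\ldots,\p_4$, so it suffices to check $\sfm_1\p_k\sfm_1^{-1}\in\Lambda$ for three values of $k$. Using $\sfm_1^{-1}=\left(\begin{smallmatrix}3&-5\\2&-3\end{smallmatrix}\right)$, direct multiplication with the explicit matrices listed just above Proposition \ref{boundary} yields
\[ \sfm_1\p_1\sfm_1^{-1}=\p_3^{-1}, \qquad \sfm_1\p_2\sfm_1^{-1}=\p_4^{-1}, \qquad \sfm_1\p_3\sfm_1^{-1}=\p_1^{-1}, \]
each of which lies in $\Lambda$, so $\sfm_1$ normalizes $\Lambda$. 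This is also the content being imported from Lemma 5.5 of \cite{CheDe}, so one may either cite that lemma or verify the three identities directly.

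For the cusp permutation, Proposition \ref{boundary} identifies the cusp labeled $k$ in $F^{(0)}=\calh/\Lambda$ with the $\Lambda$-orbit of the unique fixed point of $\p_k$ on $\partial\calh=\mathbb{R}\cup\{\infty\}$. Reading those fixed points off the four matrices gives $z_1=0$, $z_2=\infty$, $z_3=5/3$, and $z_4=3/2$. Since $\sfm_1$ is real and unimodular it acts on $\partial\calh$ by the M\"obius transformation $z\mapsto(-3z+5)/(-2z+3)$, which sends $0\mapsto 5/3$, $\infty\mapsto 3/2$, $5/3\mapsto 0$, and $3/2\mapsto\infty$. This is exactly the permutation $(1\,3)(2\,4)$ of $\{z_1,z_2,z_3,z_4\}$, completing the lemma. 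The only step demanding any calculation is the normalization identities, and even those are two-fold $2\times 2$ matrix products; everything else is either a fixed-point computation or a formal conjugation argument.
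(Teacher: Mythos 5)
Your proposal is correct, and I have checked the computations: $\sfm_1\p_1\sfm_1^{-1}=\left(\begin{smallmatrix}16&-25\\9&-14\end{smallmatrix}\right)=\p_3^{-1}$, $\sfm_1\p_2\sfm_1^{-1}=\left(\begin{smallmatrix}-31&45\\-20&29\end{smallmatrix}\right)=\p_4^{-1}$, and the fixed points $0,\infty,5/3,3/2$ of $\p_1,\ldots,\p_4$ are permuted by $z\mapsto(-3z+5)/(-2z+3)$ exactly as you state. The paper itself offers no argument here beyond citing Proposition \ref{boundary} and Lemma 5.5 of \cite{CheDe}, so your contribution is to make the deferred computation explicit and self-contained; the conjugation-by-$\sfc^{-2n}$ reduction to $n=0$ is exactly the intended use of the definitions of $\Lambda^{(n)}$, $\p_k^{(n)}$, and $F^{(n)}$. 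Two small points worth making explicit. First, checking $\sfm_1\p_k\sfm_1^{-1}\in\Lambda$ for three generators only gives $\sfm_1\Lambda\sfm_1^{-1}\subseteq\Lambda$, not equality; but equality is immediate either because $\sfm_1^2=-I$ so $\sfm_1$ is an involution in $\mathrm{PSL}_2(\mathbb{C})$, or because your three conjugates $\p_3^{-1},\p_4^{-1},\p_1^{-1}$ are themselves three of the $\p_k^{\pm1}$ and hence generate $\Lambda$ by Proposition \ref{boundary}. Second, your cusp-permutation argument tacitly uses that the cusp labeled $k$ of $F^{(0)}=\calh/\Lambda$ is the $\Lambda$-orbit of the fixed point of $\p_k$ and that $\sfm_1$ (being real) preserves $\calh$; both are true and worth a sentence. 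Alternatively, the cusp permutation can be read off from the normalization identities alone, since $\sfm_1\p_k\sfm_1^{-1}$ generates the same maximal parabolic subgroup as $\p_{\sigma(k)}$ with $\sigma=(1\,3)(2\,4)$, which avoids the separate fixed-point computation entirely.
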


%%%%%%%%%%%%%%%%%%%%%%%%%%
\subsection{The proof of existence}\label{prooferator}  The key new tool we need to prove Theorem \ref{hidden extension} is a discrete group containing both $\Delta_n$, with finite index, and also the isometry $\sfm_1$ that induces the mutation $(1\,3)(2\,4)$.  (The group $G_{m+n}$ of \cite[Lemma 6.2]{CheDe} plays this role for the group $\Gamma_{m+n}$ uniformizing $S^3-L_{m+n}$, by \cite[Prop.~6.3]{CheDe}.)  We will use this with a standard argument to show there is a hidden extension.

As in Definitions 6.1 of \cite{CheDe}, let $\calb_0$ be the open half-ball in the upper half-space model of $\mathbb{H}^3$ bounded by the Euclidean hemisphere of unit radius centered at $0 \in \mathbb{C}$ and, for $k\in\mathbb{N}$, let $\calb_k$ be the Euclidean translate of $\calb_0$ centered at $k(-i\sqrt{2})$, where $i$ is the imaginary unit.  For complex numbers $z$ and $w$, refer by $z\calh+w$ to the geodesic plane $(z\mathbb{R}+w)\times(0,\infty)$.   

\begin{dfn}\label{Qn}  For an integer $n \geq 0$, define $\calq_n$ to be the polyhedron of $\mathbb{H}^3$ bounded by $\calh+i/2$, $i\calh$, $i\calh+1/2$, and $\partial \calb_k$ for $k\in\{0,1,\hdots,n\}$.  Further define:
\begin{enumerate}
\item  $\f_0$ by first reflecting in $i\calh$ and then in $i\calh+1/2$;
\item  $\sfb_0$ by first reflecting in $\calh+i/2$ and then in $\partial \calb_0$; and
\item  for $k\geq 0$, $\sfa_k$ by reflecting in $i\calh+1/2$ and then in $\partial \calb_k$.\end{enumerate}
\end{dfn}

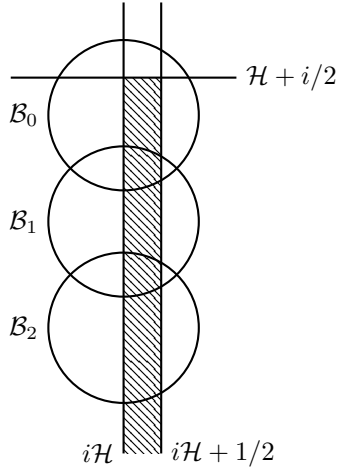
\begin{figure}[ht]
\centering
\begin{tikzpicture}[thick,scale=1]

\fill [pattern=north west lines] (0,0.5) rectangle (0.5,-4.5);

\draw (0,-4.5) -- (0,1.5);
\draw (0.5,-4.5) -- (0.5,1.5);
\draw (-1.5,0.5) -- (1.5,0.5);

\draw (0,0) circle(1);
\draw (0,-1.414) circle(1); 
\draw (0,-2.828) circle(1); 

\coordinate [label= left: $\mathcal{B}_0$] (B0) at (-1,0);
\coordinate [label= left: $\mathcal{B}_1$] (B1) at (-1,-1.414);
\coordinate [label= left: $\mathcal{B}_2$] (B2) at (-1,-2.828);
\coordinate [label= right: $\mathcal{H}+i/2$] (T) at (1.5,0.5);
\coordinate [label= left: $i\mathcal{H}$] (L) at (0,-4.5);
\coordinate [label= right: $i\mathcal{H}+1/2$] (R) at (0.5,-4.5);

\end{tikzpicture}
\caption{Bounding hyperplanes for $\calq_2$ viewed from above.}
\label{Q2}
\end{figure}
As defined, we have
\begin{align*}
\f_0 &= \left(\begin{smallmatrix} 1&1\\0&1 \end{smallmatrix}\right) & \sfb_0&=\left(\begin{smallmatrix} 0&i\\i&1 \end{smallmatrix}\right) & \sfa_0&=\left(\begin{smallmatrix} 0&-1\\1&-1\end{smallmatrix}\right) &
\sfa_1&=\left(\begin{smallmatrix} -i\sqrt{2}&1+i\sqrt{2}\\1&-1+i\sqrt{2}\end{smallmatrix}\right) .
\end{align*}
In particular, we see that $\langle \f_0, \sfa_0\rangle =\mathrm{PSL}_2(\mathbb{Z})$ contains $\sfm_1 $.

\begin{lemma}\label{Hn} For each integer $n \geq 0$, the orientation-preserving subgroup $H_n$ of the group generated by reflections in the faces of $\calq_n$ satisfies:
\begin{enumerate}
\item $H_n$ is a Kleinian group generated by $\{\f_0,\sfb_0,\sfa_0,\hdots,\sfa_n\}$.
\item $\Delta_n<H_{2n}$ with finite index.
\item The projection $\sfc^{-n}(\calh)\to\mathbb{H}^3/H_n$ factors through an isometric embedding of $\calh/\mathrm{PSL}_2(\mathbb{Z})$ onto $\bound C(H_n)$.
\end{enumerate}
\end{lemma}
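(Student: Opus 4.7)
Overall, I would apply Poincar\'e's polyhedron theorem twice---to $\calq_n$ itself for part~(1) and to a natural enlargement cut by $\sfc^{-n}(\calh)$ for part~(3)---and then use explicit generator identification together with a covolume comparison for part~(2). For part~(1), I verify that every dihedral angle of $\calq_n$ has the form $\pi/m$. Any two of $\calh+i/2$, $i\calh$, and $i\calh+1/2$ that intersect do so orthogonally. Each $\partial\calb_k$ meets $i\calh$ orthogonally because the center of $\calb_k$ lies on the line underlying $i\calh$. Consecutive hemispheres $\partial\calb_k$ and $\partial\calb_{k+1}$ have centers at Euclidean distance $\sqrt 2$ and unit radii, so the triangle formed with any common boundary point is right isoceles and they meet orthogonally. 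The remaining intersecting pairs $(\partial\calb_0,\calh+i/2)$ and $(\partial\calb_k,i\calh+1/2)$ each have the hemisphere's center at distance $1/2$ from the vertical plane with unit radius, giving $\arccos(1/2)=\pi/3$. Poincar\'e's theorem then produces a discrete reflection group $R_n$ with $\calq_n$ as fundamental polyhedron, and $H_n$ is its index-two orientation-preserving subgroup; each of the listed elements is a product of two face reflections, and a standard check (using $\sfa_0^{-1}\sfb_0$ to recover the remaining product-of-two reflections) shows that $\{\f_0,\sfb_0,\sfa_0,\ldots,\sfa_n\}$ generates.

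For part~(3), the key observation is that $\sfc^{-n}(\calh)$---the vertical plane over $\{\mathrm{Im}(z)=-n\sqrt 2\}$---is orthogonal to every face of $\calq_n$ that it meets: perpendicular to $i\calh$ and $i\calh+1/2$ as vertical planes over perpendicular lines, perpendicular to $\partial\calb_n$ whose center lies on its base line, asymptotic to $\calh+i/2$, and disjoint from $\partial\calb_k$ for $k<n$ since the center-to-line distance $(n-k)\sqrt 2\geq\sqrt 2$ exceeds the radius $1$. Consequently $\calq_n\cap\{\mathrm{Im}(z)\geq -n\sqrt 2\}$ is a finite-volume polyhedron whose dihedral angles remain of the form $\pi/m$, and Poincar\'e's theorem yields a discrete enlargement of $R_n$ by the reflection across $\sfc^{-n}(\calh)$ in which $H_n$ sits with finite index. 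The image of $\sfc^{-n}(\calh)$ in $\mathbb{H}^3/H_n$ is therefore totally geodesic and bounds $C(H_n)$. A direct matrix calculation verifying $\sfa_n=\sfc^{-n}\sfa_0\sfc^n$ together with $\sfc^{-n}\f_0\sfc^n=\f_0$ identifies the $H_n$-stabilizer of $\sfc^{-n}(\calh)$ as $\sfc^{-n}\mathrm{PSL}_2(\mathbb Z)\sfc^n=\langle\f_0,\sfa_n\rangle$, and the fundamental-domain check $\calq_n\cap\sfc^{-n}(\calh)=\{0\leq x\leq 1/2,\ x^2+t^2\geq 1\}$, which doubles through reflection across $i\calh\cap\sfc^{-n}(\calh)$ to the full modular triangle, shows that the projection factors isometrically through $\calh/\mathrm{PSL}_2(\mathbb Z)$ onto $\bound C(H_n)$.

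For part~(2), containment $\Delta_n<H_{2n}$ reduces to writing each generator of $\Delta_n$ as a word in $\{\f_0,\sfb_0,\sfa_0,\ldots,\sfa_{2n}\}$. For $\Delta_0$: the generator $\s=\sfa_0\f_0\sfa_0^{-1}$ lies in $\langle\f_0,\sfa_0\rangle\subset H_0$, while $\t$ admits an explicit word using $\sfb_0$. For each $\Gamma_T^{(j)}$, the identity $\sfa_k=\sfc^{-k}\sfa_0\sfc^k$ together with analogous $\sfc^{-k}$-conjugate computations for $\sfb_0$---and a parallel treatment of the complex-conjugate part coming from $\sfc^{-2}\overline{\Gamma}_{T_0}\sfc^2$---propagates containment to $H_{2n}$ whenever the conjugating exponent is at most $2n$. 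Finite index then follows from comparing covolumes: by Proposition~\ref{tangle structure} the volume of $C(\Delta_n)$ is the sum of one right-angled ideal octahedron and $n$ right-angled ideal cuboctahedra, while $\mathrm{vol}(C(H_{2n}))$ is twice the volume of the cut polyhedron produced in part~(3) and is finite. The main obstacle is the bookkeeping here---particularly tracking the complex-conjugate contribution $\sfc^{-2j}\overline{\Gamma}_{T_0}\sfc^{2j}$ of each $\Gamma_T^{(j)}$ and organizing the covolume comparison carefully enough to confirm finiteness of the index.
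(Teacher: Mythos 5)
Your overall architecture matches the paper's (Poincar\'e's theorem for $\calq_n$, explicit words in $\sfa_0,\sfb_0,\f_0$ plus conjugation by $\sfc$ for the containment $\Delta_n<H_{2n}$, an orthogonal cutting plane $\sfc^{-n}(\calh)$ truncating $\calq_n$ to a finite-volume polyhedron $P_n$, and a volume comparison for finite index), and several of your computations are correct and nicely done: the dihedral angles of $\calq_n$, the identity $\s=\sfa_0\f_0\sfa_0^{-1}$, the orthogonality of $\sfc^{-n}(\calh)$ to exactly the faces $i\calh$, $i\calh+1/2$, $\partial\calb_n$, and the identification of $\calq_n\cap\sfc^{-n}(\calh)$ with half of the modular triangle. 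Applying Poincar\'e directly to the reflection polyhedron $\calq_n$ and recovering the missing generator as $\sfa_0^{-1}\sfb_0$ is a legitimate (and slightly cleaner) alternative to the paper's route, which instead doubles across $i\calh+1/2$ and applies Poincar\'e to the face-pairing $\{\f_0,\sfa_0\sfb_0,\sfa_0,\dots,\sfa_n\}$ of $\calq_n\cup\sfr'(\calq_n)$, then pins down equality with $H_n$ by comparing fundamental domains.

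There is, however, a genuine error at the pivot point of parts (2) and (3). You claim that Poincar\'e applied to $P_n=\calq_n\cap\{\mathrm{Im}(z)\ge -n\sqrt2\}$ ``yields a discrete enlargement of $R_n$ by the reflection across $\sfc^{-n}(\calh)$ in which $H_n$ sits with finite index.'' This is false: $P_n$ has finite volume, so the enlarged reflection group is a lattice, whereas $\calq_n$ has infinite volume and $H_n$ has infinite covolume; hence $H_n$ has \emph{infinite} index in that enlargement. Since this claim is your sole justification that the image of $\sfc^{-n}(\calh)$ is totally geodesic and equals $\bound C(H_n)$, the identification $C(H_n)=(P_n\cup\sfr'(P_n))/H_n$ is left unproved --- and everything downstream depends on it: part (3) needs it to know the quadrilateral $P_n\cap\sfc^{-n}(\calh)$ (doubled) really maps onto $\bound C(H_n)$, and part (2) needs it because ``both covolumes are finite'' only yields finite index once you know the orbifold cover $\mathbb{H}^3/\Delta_n\to\mathbb{H}^3/H_{2n}$ restricts to a map of finite-volume convex cores $C(\Delta_n)\to C(H_{2n})$; comparing two finite numbers proves nothing if you cannot locate the convex cores. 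The paper fills exactly this hole with a two-sided inclusion: $P_n\cup\sfr'(P_n)$ lies inside $\calp_1\cup\bigcup_k\sfc^{-k}(\calp_2)$, a union of ideal polyhedra whose vertices are parabolic fixed points and hence lie in the limit set, so its quotient sits inside $C(H_n)$; the reverse inclusion comes from the argument of \cite[Lemma 2.1]{CheDe}. You need some version of this (or another proof that the half-space $\{\mathrm{Im}(z)<-n\sqrt2\}$ is precisely invariant and meets the limit set only in $\infty$) before the orthogonality observations can be converted into statements about $\bound C(H_n)$. Separately, the words expressing $\t$, the generators of $\Gamma_{T_0}$, and those of $\sfc^{-2}\overline{\Gamma}_{T_0}\sfc^{2}$ in the $\sfa_k,\sfb_0,\f_0$ are deferred rather than supplied; the paper outsources these to formulas (8)--(9) and Proposition 6.3 of \cite{CheDe}, so that part is acceptable bookkeeping rather than a gap.
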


\begin{proof}  Clearly $\f_0\in H_n$, $\sfb_0\in H_n$, and $\sfa_i\in H_n$ for $0\leq i\leq n$.  Let $\sfr'$ denote the reflection across $i\calh+1/2$.  It is not hard to see that $\calq_n\cup \sfr'(\calq_n)$ is a convex polyhedron in $\mathbb{H}^3$ with one face in each of $\calh+i\sqrt{2}$, $i\calh$, $i\calh+1$, $\partial \calb_k$, and $\partial \sfr'(\calb_k)$ for $k\in\{0,1,\hdots,n\}$.  The following facts can be explicitly verified:
\begin{itemize}
\item The face in $\calh+i\sqrt{2}$ meets those in $i\calh$ and $i\calh+1$ at right angles, those in $\partial\calb_0$ and $\sfr'(\partial\calb_0)$ at an angle of $\pi/3$, and no others.  The product $\sfa_0\sfb_0$ rotates by $\pi$ about an axis that bisects this face, preserving it.
\item  The face in $i\calh$ meets each of those in $\partial\calb_k$ at an angle of $\pi/2$, and none of those in $\sfr'(\partial\calb_j)$.  The element $\f_0$ takes this face to the one in $i\calh+1$.
\item  The face in $\partial\calb_k$ shares an edge with the face in $\sfr'(\partial\calb_{k'})$ if and only if $k = k'$; in this case at an angle of $2\pi/3$.  The element $\sfa_k$ takes the latter to the former.  The faces in $\partial\calb_k$ and $\partial \calb_{k-1}$ meet at an angle of $\pi/2$ for $k>0$; likewise those in $\partial\calb_k$ and $\partial\calb_{k+1}$ for $k<n$; and $\partial\calb_k\cap\partial\calb_{k'} = \emptyset$ for $k'\notin\{k-1,k,k+1\}$.\end{itemize}
Hence, $\{\f_0,\sfa_0\sfb_0,\sfa_0,\hdots,\sfa_n\}$ is a face-pairing for $\calq_n\cup\sfr'(\calq_n)$. Poincare's polyhedron theorem implies that this set of isometries generates a discrete group whose fundamental domain is $\calq_n\cup\sfr'(\calq_n)$.  By construction, this group is contained in $H_n$.  It is equal to $H_n$ because their fundamental domains have the same volume.

The numbered formulas (8) and (9) above Proposition 6.3 of \cite{CheDe} express the generators of $\Delta_0$ in terms of $\sfa_0$, $\sfb_0$, and $\f_0$ and they express $\Gamma_{T_0}$ in terms of $\sfa_0$, $\sfa_1$, and $\f_0$.  Therefore, $\Delta_0< H_n$ and $\Gamma_{T_0}<H_n$.  It can be verified directly that, for every $k$, $\sfc^{-1}\sfa_k\sfc=\sfa_{k+1}$  and that $\sfc$ commutes with $\f_0$.  It follows that $\sfc^{-k}\Gamma_{T_0}\sfc^k<H_n$ for all $k\leq n-1$.  Moreover, the second paragraph of the proof of \cite[Proposition 6.3]{CheDe} expresses the generators of $\sfc^{-2}\overline{\Gamma}_{T_0}\sfc^2$ in terms of $\sfa_1$, $\sfa_2$, and $\f_0$.  So, if $n\geq 2$, this group is also in $H_n$.  Now, by definition, we have $\Delta_n < H_{2n}$.

The polyhedron $P_n$ of \cite[Lemma 6.2]{CheDe} consists of points $(z,t)\in\calq_n$ such that the imaginary coordinate of $z$ is at least $-n\sqrt{2}$.  Every face of $P_n$ is a face of $\calq_n$ except the unique face $\mathcal{F}$ of $P_n$ contained in $\calh-n\cdot i\sqrt{2}$.  The face $\mathcal{F}$ is orthogonal to $\partial\calb_n$, $i\calh$, and $i\calh+1/2$ and does not meet any other bounding hyperplanes of $\calq_n$.  
This means that a single face of $P_n\cup\sfr'(P_n)$ contains $\mathcal{F}$, meeting only the bounding hyperplanes $i\calh$, $i\calh+1$, $\partial\calb_n$, and $\sfr'(\partial\calb_n)$.  Moreover, these intersections are all orthogonal, so $\mathcal{F}$ projects to the sole totally geodesic boundary component of the orbifold $(P_n\cup\sfr'(P_n))/H_n$.

We claim that $(P_n\cup\sfr'(P_n))/H_n=C(H_n)$.  We first show that $P_n\cup\sfr'(P_n)$ is contained in the convex hull of the limit set of $H_n$, which implies that $(P_n\cup\sfr'(P_n))/H_n$ is contained in the convex core.  Inspecting Figures 3 and 4 in \cite{CheDe}, one observes that $P_n\cup\sfr'(P_n)$ is contained in the union $\calp_1\cup\bigcup_{k=0}^{n-1} \sfc^{-k}(\calp_2)$, where $\calp_1$ and $\calp_2$ are the regular ideal octahedron and right angled ideal cuboctahedron described in Corollaries 2.2 and 2.3 of \cite{CheDe}.  Both $\calp_1$ and $\calp_2$ are the convex hulls of their ideal points, and each of these is a parabolic fixed point of $\Delta_0$ or $\Gamma_{T_0}$, respectively.  (One can show this directly, or appeal to the third-from-last paragraph of the proof of \cite[Lemma 2.1]{CheDe}.)  Since each parabolic fixed point of a Kleinian group lies in its limit set, it follows that $\calp_1\cup\bigcup_{k=0}^{n-1} \sfc^{-k}(\calp_2)$ is in the convex hull of the limit set of $H_n$.  As a subset, $P_n\cup\sfr'(P_n)$ shares this property.  On the other hand, the penultimate paragraph of \cite[proof of Lemma 2.1]{CheDe} shows that $(P_n\cup\sfr'(P_n))/H_n$ contains $C(H_n)$ and this proves our claim.  

By the above, $\sfc^{-2n}(\calh)$ projects to $\partial C(H_{2n})$ under the quotient map $\mathbb{H}^3\to\mathbb{H}^3/H_{2n}$.  By Proposition \ref{tangle structure}, the same plane projects to $\partial C(\Delta_n)$ under $\mathbb{H}^3\to\mathbb{H}^3/\Delta_n$.  It follows that the orbifold covering map $\mathbb{H}^3/\Delta_n\to\mathbb{H}^3/H_{2n}$ restricts to one $C(\Delta_n)\to C(H_{2n})$.  Since these both have finite volume, the map is finite-to-one, and hence $\Delta_n$ has finite index in $H_{2n}$.

Among all bounding hyperplanes of $\calq_n\cup\sfr'(\calq_n)$, only $i\calh$, $i\calh+1$, $\partial\calb_n$, and $\sfr'(\partial\calb_n)$ meet the hyperplane $\calh-n\cdot i\sqrt{2}$.  Each of these intersections is a right angle.  Thus, $\mathcal{F}$ is a quadrilateral and $\{\sfa_n,\f_0\}$ is an edge pairing for $\mathcal{F}$.  This implies that $\mathcal{F}/\langle \sfa_n,\f_0 \rangle$ is the boundary of $(P_n\cup\sfr'(P_n))/H_n$.

We mentioned above that $\sfc^{-n}\sfa_0\sfc^n=\sfa_{n}$ and $\f_0\sfc = \sfc\f_0$, so $\langle\sfa_n,\f_0\rangle = \sfc^{-n}\langle\sfa_0,\f_0\rangle\sfc^n$. Therefore, the projection $\calh-n\cdot i\sqrt{2} = \sfc^{-n}(\calh) \to \mathbb{H}^3/H_n$ factors through an isometric embedding of $\calh/\mathrm{PSL}_2(\mathbb{Z})$.
\end{proof}

We will use the following simple fact below, and several more times.  

\begin{fact}\label{core covers}  If $H$ has finite index in a non-elementary Kleinian group $G$ then the limit sets of $G$ and $H$ are equal, so the natural map $C(H)\to C(G)$ is an orbifold cover.\end{fact}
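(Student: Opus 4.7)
The plan is to reduce the whole assertion to the limit set equality $\Lambda(H)=\Lambda(G)$, since the claim about the natural map then follows formally. Recall that $C(\Gamma)$ is by definition the image in $\mathbb{H}^3/\Gamma$ of the convex hull of $\Lambda(\Gamma)$ in $\mathbb{H}^3$. Once the limit sets coincide, so do these convex hulls, and the finite-sheeted orbifold cover $\mathbb{H}^3/H \to \mathbb{H}^3/G$ (which exists because $[G:H]<\infty$) carries $C(H)$ onto $C(G)$. The restriction is itself an orbifold cover because the preimage in $\mathbb{H}^3/H$ of the convex hull of $\Lambda(G)$ is exactly the convex hull of $\Lambda(H)$, and this descends to $C(H)$.

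For the limit set equality, the inclusion $\Lambda(H) \subseteq \Lambda(G)$ is immediate from $H \subseteq G$. For the reverse I would use the finite index hypothesis: for any $g \in G$, its image in the finite quotient $G/H$ has finite order, so $g^k \in H$ for some $k \geq 1$; if $g$ is loxodromic then $g^k$ is loxodromic with the same pair of fixed points on $S^2_\infty$, placing those points in $\Lambda(H)$. Because $G$ is non-elementary, $\Lambda(G)$ is the closure of the set of fixed points of loxodromic elements of $G$, and this closed set is already contained in $\Lambda(H)$, yielding $\Lambda(G) \subseteq \Lambda(H)$.

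There is no genuine obstacle here, as both steps are classical. The one point worth flagging is the appeal to non-elementarity of $G$, which both supplies the loxodromic elements used in the argument and ensures that their fixed points are dense in $\Lambda(G)$; without this hypothesis the convex core discussion itself becomes degenerate.
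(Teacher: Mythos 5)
Your argument is correct and is the standard one: the paper states this Fact without proof, and your route (finite index gives $g^k\in H$ with the same loxodromic fixed points, density of such fixed points in $\Lambda(G)$ for non-elementary $G$ gives $\Lambda(G)\subseteq\Lambda(H)$, and then $C(H)$ and $C(G)$ are quotients of the one convex hull, so the restriction of $\mathbb{H}^3/H\to\mathbb{H}^3/G$ is an orbifold cover) is exactly what is intended. No gaps.
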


\begin{thm}\label{hidden extension} \HiddenExtension\end{thm}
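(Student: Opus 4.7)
The plan is to exhibit $\sfm_1^{(n)}$ as an element of $\mathrm{Comm}(\Delta_n)$ using Lemma~\ref{Hn}, then produce a hidden extension by combining the resulting isometry of covers with the geometric realization of the mutation from Lemma~\ref{mutator}. The link-complement case runs on the same template, using the group $G_{m+n}$ of \cite[Lemma~6.2]{CheDe}, which by \cite[Proposition~6.3]{CheDe} contains $\Gamma_{m+n}$ with finite index and also contains $\sfm_1^{(n)}$.

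First I would verify $\sfm_1^{(n)} \in H_{2n}$. The observation just after Definition~\ref{Qn} gives $\sfm_1 \in \langle\f_0,\sfa_0\rangle = \mathrm{PSL}_2(\mathbb{Z})$, and the identities $\sfc^{-k}\sfa_0\sfc^k = \sfa_k$ and $[\sfc,\f_0] = 1$ from the proof of Lemma~\ref{Hn} then yield
\[ \sfm_1^{(n)} \ =\ \sfc^{-2n}\sfm_1\sfc^{2n}\ \in\ \sfc^{-2n}\langle\f_0,\sfa_0\rangle\sfc^{2n}\ =\ \langle\f_0,\sfa_{2n}\rangle\ \subset\ H_{2n}. \]
Because $\Delta_n$ has finite index in $H_{2n}$ by Lemma~\ref{Hn}(2), conjugation by $\sfm_1^{(n)}$ is an inner automorphism of $H_{2n}$ that permutes its finite-index subgroups, so $\Delta_n' \doteq \Delta_n\cap \sfm_1^{(n)}\Delta_n(\sfm_1^{(n)})^{-1}$ has finite index in both $\Delta_n$ and its $\sfm_1^{(n)}$-conjugate. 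Thus $\sfm_1^{(n)} \in \mathrm{Comm}(\Delta_n)$, and $x\mapsto\sfm_1^{(n)}x$ descends to a homeomorphism
\[ \Phi\co\mathbb{H}^3/(\sfm_1^{(n)})^{-1}\Delta_n'\sfm_1^{(n)} \longrightarrow \mathbb{H}^3/\Delta_n' \]
between connected finite-sheeted covers of $B^3-T_n$.

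To see that $\Phi$ extends the mutation, Lemma~\ref{mutator} says that $\sfm_1^{(n)}$ preserves the plane $\sfc^{-2n}(\calh)$ and induces the cycle $(1\,3)(2\,4)$ on the cusps of its $\Lambda^{(n)}$-quotient $F^{(n)}$. By Proposition~\ref{tangle structure} $F^{(n)}\cong\partial(B^3-T_n)$, so a component of the preimage of $\partial(B^3-T_n)$ in the domain of $\Phi$ is covered by $\sfc^{-2n}(\calh)$, and $\Phi$ restricts on it to a lift of the mutation. For the link case, the identical commensurator argument with $G_{m+n}$ in place of $H_{2n}$ yields a hidden symmetry of covers of $S^3-L_{m+n}$; the same plane $\sfc^{-2n}(\calh)$ now projects to the totally geodesic four-punctured sphere coming from $S^{(n)}=\partial B^3$ inside $S^3-L_{m+n}$, and $\Phi$ restricts there to a lift of the mutation.

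The main obstacle is verifying that $\Phi$ is genuinely hidden, i.e., that $\sfm_1^{(n)}\notin N(\Delta_n)$ in the tangle case and $\sfm_1^{(n)}\notin N(\Gamma_{m+n})$ in the link case. The latter is the non-extension statement flagged in the introduction, which one would verify by analyzing the isometry group of $S^3-L_{m+n}$ to rule out any self-homeomorphism of $(S^3,L_{m+n})$ restricting to $(1\,3)(2\,4)$ on $S^{(n)}$. In the tangle case one needs to rule out a self-homeomorphism of $(B^3,T_n)$ with the same boundary behavior; since the spheres $S^{(j)}$ are totally geodesic, any such symmetry must permute them (and in particular preserve $S^{(0)}$), reducing the question to the absence of a mutation-realizing symmetry of the building-block tangle $(B^3,S)$, which can be checked directly from the description in Figure~\ref{SUT}.
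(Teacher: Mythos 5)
Your construction of the candidate hidden symmetry is essentially the paper's: the paper likewise places $\sfm_1^{(n)}$ in $H_{2n}$, invokes Lemma \ref{Hn}(2) for finite index, and uses the normalization of $\Lambda^{(n)}$ to see that the resulting map lifts the mutation on the boundary component covered by $\sfc^{-2n}(\calh)$. The only cosmetic difference is that the paper passes to the normal core $\Omega_n$ of $\Delta_n$ in $H_{2n}$, which $\sfm_1^{(n)}$ normalizes, so its hidden symmetry is a self-isometry of a single cover rather than a map between the two covers $\mathbb{H}^3/(\sfm_1^{(n)})^{-1}\Delta_n'\sfm_1^{(n)}$ and $\mathbb{H}^3/\Delta_n'$. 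That part of your argument is sound.

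The genuine gap is exactly where you flag ``the main obstacle'': you never prove that $\Phi$ fails to descend, and the sketch you offer would not close this. In the link case you defer to ``analyzing the isometry group of $S^3-L_{m+n}$,'' which is not carried out and is precisely the hard content. In the tangle case your reduction is both unjustified and insufficient: a hypothetical extension of the mutation over $B^3-T_n$ is a priori only a homeomorphism, so you must first apply Mostow--Prasad rigidity to replace it by an isometry; an isometry carries totally geodesic surfaces to totally geodesic surfaces but need not preserve the particular collection $\{S^{(j)}\}$ without an argument that these are canonical; and even granting that it preserves $S^{(0)}$, nothing forces its restriction to $S^{(0)}-T_n$ to be the mutation $(1\,3)(2\,4)$ again, so ``no mutation-realizing symmetry of $(B^3,S)$'' is not the statement you need --- and in any case ``checked directly from the figure'' is not a proof, since distinguishing a tangle or link from its mutant is genuinely delicate. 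The paper closes this step in the opposite direction: if $(1\,3)(2\,4)$ extended over $B^3-T_n$, then $S^3-L_{m+n}$ would be homeomorphic to its mutant along $S^{(n)}$, hence isometric by Mostow--Prasad rigidity, contradicting Theorem 2 of \cite{CheDe}, which shows $L_{(0,\hdots,0)}$ is not isometric to $L_{(0,\hdots,1,\hdots,0)}$; the same contradiction rules out extension over $S^3-L_{m+n}$. Without citing that result (or supplying an equivalent computation distinguishing the mutant links), your argument produces a symmetry of covers but does not show it is hidden.
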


\begin{proof}  As we mentioned in the introduction to this paper, we may view $(B^3,T_n)$ as a subset of $(S^3, L_{n+m})$. (For more rigor, compare the definitions at the beginning of this section with \cite[Definitions 3.8]{CheDe}.)  Here it is bounded by the sphere $S^{(n)}$, with the mirror image $(B^3,\overline{T}_m)$ of $T_m$ on the other side.  If the mutation $(1\,3)(2\,4)$ extended over $B^3-T_n$,  then $S^3 - L_{n+m}$ would be homeomorphic  to its mutant by $(1\,3)(2\,4)$ along $S^{(n)}$.  By Mostow-Prasad rigidity, these two links would be isometric, but by Theorem 2 of \cite{CheDe} they are not.  (In the notation of that result, $L_{m+n} = L_{(0,\hdots,0)}$ and its mutant is $L_{(0,\hdots,1,\hdots,0)}$ with the sole ``$1$'' the $(m+1)$th entry.)  This also implies it does not extend over $S^3-L_{m+n}$.

However, because $\sfm_1^{(n)}$ lies in the finite extension $H_{2n}$ of $\Delta_n$ it normalizes the normal core $\Omega_n$ of $\Delta_n$ in $H_{2n}$ and determines a self-isometry $\tilde{\Psi}$ of $\mathbb{H}^3/\Omega_n$.  This is a finite cover of $\mathbb{H}^3/\Delta_n$ which by Fact \ref{core covers} above restricts to a cover $C(\Omega_n) \to C(\Delta_n)$. 

In particular, the boundary of $C(\Omega_n)$ is totally geodesic.  One component of $\partial C(\Omega_n)$ is the quotient of $\sfc^{-2n}(\calh)$ by its stabilizer $\tilde{\Lambda}^{(n)} = \Omega_n\cap \Lambda^{(n)}$ in $\Omega_n$.  Since $\sfm_1^{(n)}$ normalizes both $\Lambda^{(n)}$ and $\Omega_n$, it normalizes $\tilde{\Lambda}^{(n)}$ and determines an isometry of $\sfc^{-2n}(\calh)/\tilde{\Lambda}^{(n)}$ lifting the one determined by $\sfm_1^{(n)}$ on $\partial C(\Delta_n)$.

A completely analogous argument applies to $S^3-L_{m+n}$, replacing $\Delta_n$ by $\Gamma_n$ from Prop.~3.12 of \cite{CheDe} and $H_{2n}$ by $G_n$ from Prop.~6.3 there.\end{proof}

%%%%%%%%%%%%%%%%%%%%%%
\section{Matching covers}\label{matching}
%%%%%%%%%%%%%%%%%%%%%%

In this section, we build an explicit hidden extension of the mutation $(13)(24)$ of $\partial (B^3-T_n)$.  To find an appropriate cover, we use the decomposition of $B^3-T_n$ along the spheres $S^{(j)}$ into one copy of $M_S$ and $n$ copies of $M_T$ and find appropriate covers of these pieces which glue together to give a cover of $B^3-T_n$ with the necessary properties.  Figure \ref{oogabooga} is a schematic depiction of how this will be done.  

For convenience, in this section we will supress the homeomorphisms $f_S$, $f_{T_0}$, and $f_T$ of Proposition \ref{omnidef} and simply make the identifications:
\begin{align*}
M_S&= C(\Delta_0) &  M_{T_0}&=C(\Gamma_{T_0}) & M_T&=C(\Gamma_T).
\end{align*}
Further, for $n\in\mathbb{N}$ and $1\leq i \leq n$ we will identify the $i$th copy $M_T^{(i)}$ of $(S^2\times I,T)$ in $(B^3,T_n)$ with $C(\Gamma_T^{(i)})$ (compare Proposition \ref{tangle structure}).

\begin{figure}
\setlength{\unitlength}{.1in}
\begin{picture}(45,29.5)
\put(0,11.5){\includegraphics{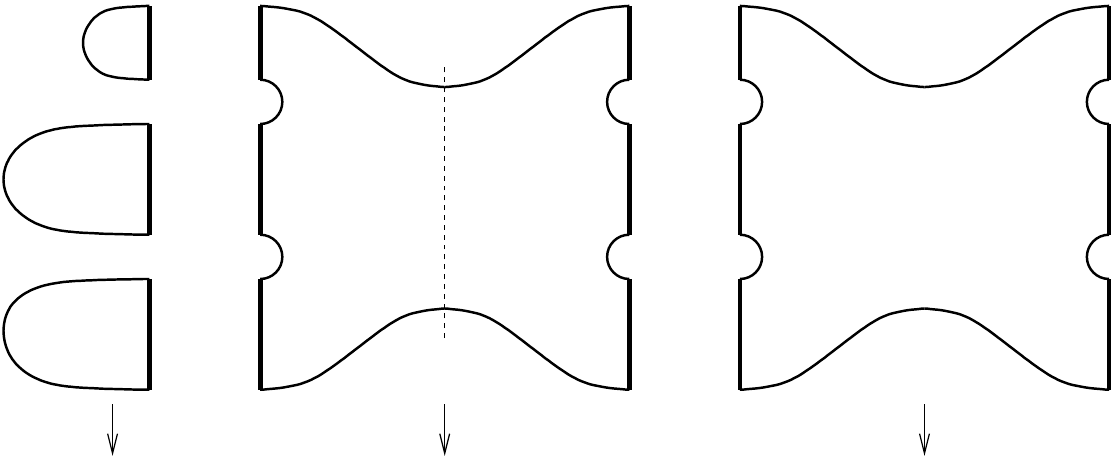}}
\put(.5,28.5){$\widetilde{M}_S$}
\put(16,28.5){$\widetilde{M}_T^{(1)}$}
\put(12.5,24){$C(\Omega_{T_0})$}
\put(36,28){$\widetilde{M}_T^{(2)}$}

\put(7,22){$\stackrel{J}{\longrightarrow}$}
\put(16.5,18.5){$\longleftrightarrow$}
\put(18.1,19.5){{\small $R_T$}}
\put(26,22){$\stackrel{R_T}{\longrightarrow}$}

\put(0,2){\includegraphics[height=.8in]{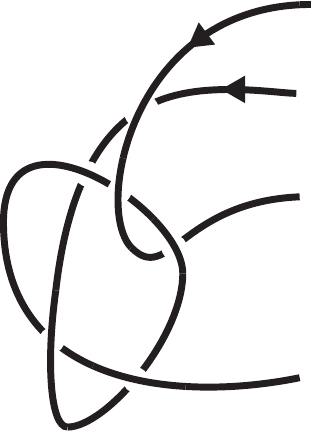}}
\put(10.3,2){\includegraphics[height=.87in]{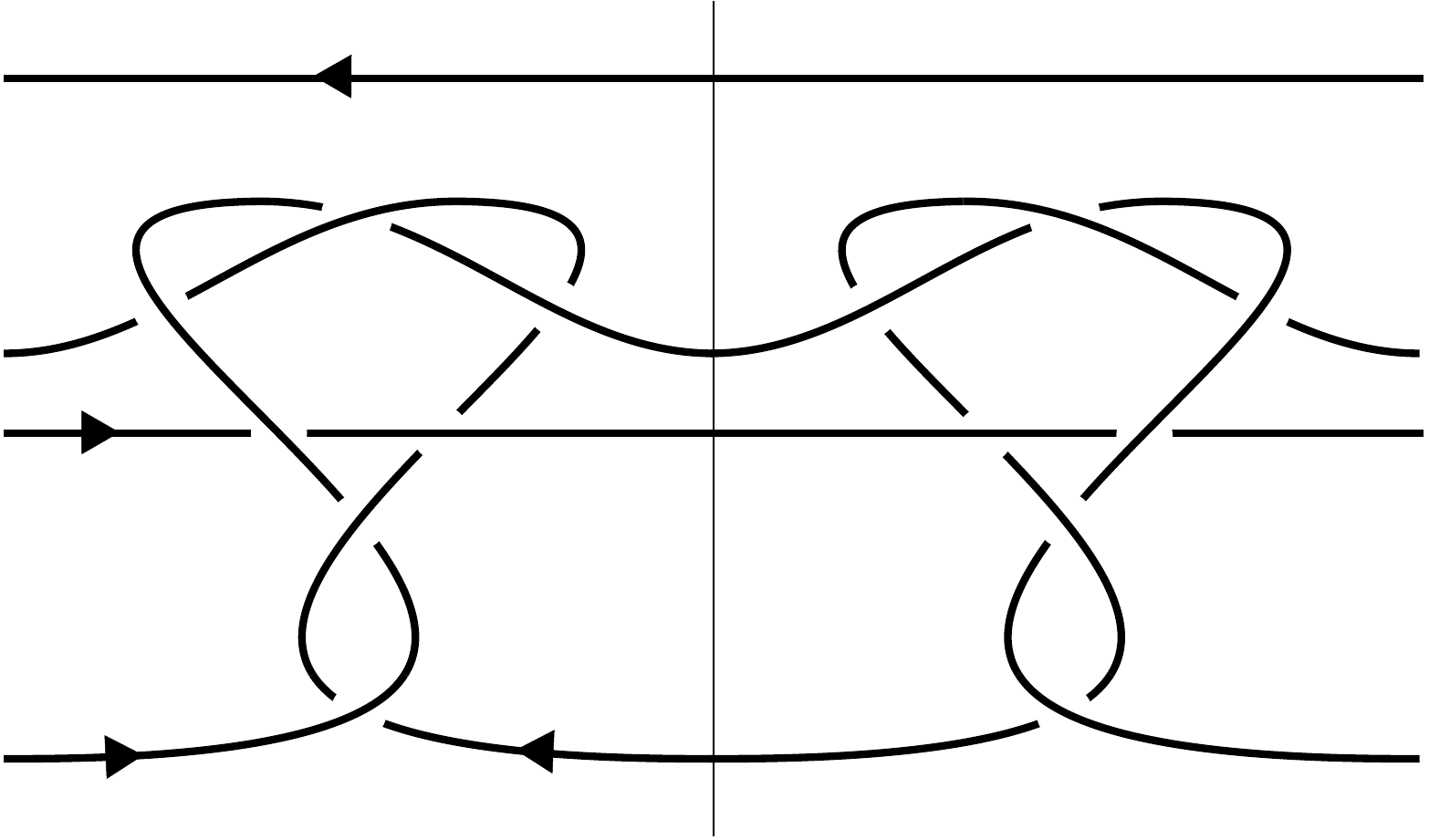}}
\put(29.6,2){\includegraphics[height=.87in]{T}}
\put(1.5,0){$M_S$}
\put(16,0){$M_T^{(1)}$}
\put(36,0){$M_T^{(2)}$}

\put(7,6){$\stackrel{j}{\longrightarrow}$}
\put(26.3,6){$\stackrel{r_T}{\longrightarrow}$}
\end{picture}

\caption{Assembling a cover of $(S^2\times I)-T_2$ that has a hidden extension of the mutation $(1\,3)(2\,4)$.}
\label{oogabooga}
\end{figure}

To produce the covers $\widetilde{M}_S$ and the $\widetilde{M}_T^{(i)}$ of Figure \ref{oogabooga} we will divide the orbifold $O_n=\mathbb{H}^3/H_n$ branched-covered by $B^3-T_n$ into pieces covered by $M_S$ and the $M_T^{(i)}$, then analyze the corresponding subgroups of $H_n$.  For $M_S$ we use $\calq_0$ from Definition \ref{Qn}: the polyhedron bounded by $\calh+i/2$, $i\calh$, $i\calh+1/2$, and $\partial \calb_0$.  Our first lemma shows that the orientation-preserving subgroup $H_0$ of the reflection group generated by $\calq_0$ contains the group $\Delta_0=\Gamma_S$ uniformizing $M_S$.  Our second uses $H_0$ and some elementary number theory to find a cover of $M_S$ with abundant symmetry.

We follow a similar strategy for $M_T$, producing a polyhedron $\calp_{T_0}$ and a group $H_{T_0}<H_n$, which Lemma \ref{the orbifold piece} shows contains the group $\Gamma_{T_0}$ uniformizing $M_{T_0}$.   We will use the permutation representation of $H_{T_0}$ given by acting on left cosets of $\Gamma_{T_0}$ to find a cover of $M_{T_0}$ with a hidden extension of $(1\,3)(2\,4)$.  Doubling this cover across a boundary component yields the model $\widetilde{M}_T$ for the $\widetilde{M}_T^{(i)}$.

\begin{lemma}\label{the other piece}  The reflection group $H_0$ (recall Lemma \ref{Hn}) has the following additional properties:
\begin{enumerate}
\item $H_0$ is a Kleinian group which contains $\Delta_0$ as a subgroup of index 12.
\item $ \displaystyle H_0 = \langle \sfa_0,\sfb_0,\f_0\,|\, \sfa_0^3 = \sfb_0^3 = (\sfb_0^{-1}\sfa_0)^2 = (\sfa_0\f_0)^2 = 1 \rangle. $
\item $\mathrm{PSL}_2(\mathbb{Z})=\mathrm{Stab}_{H_0}(\calh)$.
\item The projection $\calh\to\mathbb{H}^3/H_0$ factors through an isometric embedding of $\calh/\mathrm{PSL}_2(\mathbb{Z})$ onto $\bound C(H_0)$.
\end{enumerate}
\end{lemma}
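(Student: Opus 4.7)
Most of the claims will reduce to the $n=0$ specialization of Lemma \ref{Hn}: Lemma \ref{Hn}(1) asserts that $H_0$ is Kleinian and generated by $\{\sfa_0,\sfb_0,\f_0\}$; Lemma \ref{Hn}(2) gives the containment $\Delta_0 < H_0$ with finite index; and Lemma \ref{Hn}(3) is exactly part (4). For part (3), I would start from the already-noted fact that $\mathrm{PSL}_2(\mathbb{Z}) = \langle \sfa_0,\f_0\rangle$ lies in $H_0$ and stabilizes $\calh$, which gives $\mathrm{PSL}_2(\mathbb{Z}) \leq \mathrm{Stab}_{H_0}(\calh)$, and then use part (4) to force equality: the embedding of $\calh/\mathrm{PSL}_2(\mathbb{Z})$ onto $\partial C(H_0) = \calh/\mathrm{Stab}_{H_0}(\calh)$ as an isometry is only possible if these two Fuchsian quotients already coincide.

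To determine the index $[H_0:\Delta_0]=12$ claimed in part (1), my plan is to pass to the totally geodesic boundaries and compare areas. By Proposition \ref{omnidef}, $\partial C(\Delta_0) \cong \calh/\Lambda$ is homeomorphic to the four-punctured sphere $\partial B^3-S$; by part (4), $\partial C(H_0)$ is the connected $(2,3,\infty)$-triangle orbifold $\calh/\mathrm{PSL}_2(\mathbb{Z})$. Since both boundaries are connected, the restriction $\partial C(\Delta_0) \to \partial C(H_0)$ of the orbifold cover $C(\Delta_0)\to C(H_0)$ has the same degree as the ambient cover. Gauss--Bonnet then yields areas $4\pi$ and $\pi/3$ respectively, so the degree, and hence the index, equals $12$.

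The presentation in part (2) will take the most effort, and I would establish it via Poincar\'e's polyhedron theorem applied to the fundamental domain $\calq_0\cup\sfr'(\calq_0)$ for $H_0$ (with $\sfr'$ the reflection in $i\calh+1/2$) produced in the proof of Lemma \ref{Hn}. That argument already exhibits $\{\f_0,\sfa_0\sfb_0,\sfa_0\}$ as a face-pairing set, and substituting $\sfb_0 = \sfa_0^{-1}(\sfa_0\sfb_0)$ converts this into the generating set $\{\sfa_0,\sfb_0,\f_0\}$ without changing the group. The four stated relations should then come from edge cycles: I expect $\sfa_0$ and $\sfb_0$ each to fix an edge with cone angle $2\pi/3$, giving $\sfa_0^3 = \sfb_0^3 = 1$, while $\sfb_0^{-1}\sfa_0$ and $\sfa_0\f_0$ should each fix an edge with cone angle $\pi$, giving the two order-two relations. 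The principal obstacle will be the edge-by-edge bookkeeping in $\calq_0\cup\sfr'(\calq_0)$: enumerating the edge equivalence classes, computing their dihedral-angle cycle sums, and verifying that the resulting cycle words give precisely these four relations with no independent omissions, so that Poincar\'e's hypotheses are fully checked.
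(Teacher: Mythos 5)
Your proposal is correct, and for parts (2), (3), and (4) it follows the paper's route: the presentation comes from Poincar\'e's theorem applied to the face-pairing $\{\f_0,\sfa_0\sfb_0,\sfa_0\}$ of $\calq_0\cup\sfr'(\calq_0)$ exhibited in the proof of Lemma \ref{Hn} (the paper does not carry out the edge-cycle bookkeeping you flag as the main labor, simply citing that face-pairing), and parts (3)--(4) are read off from the $n=0$ case of Lemma \ref{Hn} exactly as you describe. The one genuine divergence is the index computation. The paper works in dimension three: it compares the volume of the regular ideal octahedron, whose face-pairing quotient is $C(\Delta_0)$ by Proposition \ref{omnidef}(\ref{M_S}), with the volume of the fundamental domain built from $P_0$ for $C(H_0)$. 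You instead restrict the cover $C(\Delta_0)\to C(H_0)$ to the totally geodesic boundary and compare areas $4\pi$ (four-punctured sphere) and $\pi/3$ (modular orbifold) via Gauss--Bonnet. Your version is valid --- the limit sets coincide by Fact \ref{core covers}, so $\partial C(\Delta_0)$ is the full preimage of $\partial C(H_0)$, and its connectedness makes the boundary degree equal the ambient degree --- and it has the advantage of needing no hyperbolic volume computations, only Euler characteristics; the paper's version has the advantage of not depending on the boundary identifications of parts (3)--(4). One small caveat: in part (3) you identify the image of $\calh$ in $\mathbb{H}^3/H_0$ with $\calh/\mathrm{Stab}_{H_0}(\calh)$, but a priori elements of $H_0$ not stabilizing $\calh$ could identify points of $\calh$; the injectivity supplied by part (4) rules this out and, together with a standard discreteness argument, forces $\mathrm{Stab}_{H_0}(\calh)=\mathrm{PSL}_2(\mathbb{Z})$, so the conclusion stands.
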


\begin{proof} As in Lemma \ref{Hn}, $\sfr'$ is the reflection through $i\calh+1/2$.  From the proof of Lemma \ref{Hn}, we know that the collection $\{\sfa_0, \sfb_0\sfa_0, \f_0\}$ is a face-pairing for $\calq_0\cup\sfr'(\calq_0)$.  So the Poincar\'e polyhedron theorem gives the presentation above.  Aside from the fact that  $[H_0, \Delta_0]=12$, the rest of the lemma follows from the special case $n=0$ in Lemma \ref{Hn}.  But, if we compare the volume of a regular ideal octahedron in $\mathbb{H}^3$ with the volume of $P_0$, we see that $[H_0, \Delta_0]=12$. \end{proof}

\begin{lemma}\label{algebra}  There is an index five subgroup $\Omega_0 < \Delta_0$ which is normal in $H_0$.  Define $\Lambda_0 = \Omega_0\cap\Lambda$.  
\begin{enumerate}
\item $[\Lambda:\Lambda_0]=5$,
\item $\p_2,\p_4\in\Lambda_0$, and
\item $\p_1$ and $\p_3$ project to generators of $\Lambda/\Lambda_0$.
\end{enumerate} 
\end{lemma}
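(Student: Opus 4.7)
The plan is to realize $\Omega_0$ as the kernel of an explicit surjection $\phi\co H_0 \to \mathrm{PSL}_2(\mathbb{F}_5) \cong A_5$. Every matrix displayed above for a generator of $H_0$ has entries in $\mathbb{Z}[i]$, so $H_0 \subset \mathrm{PSL}_2(\mathbb{Z}[i])$, and we may define $\phi$ by reducing entries modulo the Gaussian prime $\pi = 2-i$, whose residue field is $\mathbb{F}_5$ with $i$ mapping to $2$. Then $\Omega_0 := \ker\phi$ is automatically normal in $H_0$, so the lemma reduces to three numerical assertions: $\phi$ is surjective (giving $[H_0 : \Omega_0] = 60$), $|\phi(\Delta_0)| = 5$ (from which $\Omega_0 \subset \Delta_0$ with index $5$ by a coset count), and the claims about the images of the $\p_k$.

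Surjectivity of $\phi$ follows from the observation just before Lemma \ref{Hn} that $\langle \sfa_0,\f_0\rangle = \mathrm{PSL}_2(\mathbb{Z})$: the restriction of $\phi$ to this subgroup is the classical mod-$5$ reduction of $\mathrm{PSL}_2(\mathbb{Z})$, which is well known to be onto $\mathrm{PSL}_2(\mathbb{F}_5)$. For the size of $\phi(\Delta_0)$, a direct reduction of the generators listed in Proposition \ref{omnidef}(\ref{M_S}) yields
\[ \phi(\s) = \left(\begin{smallmatrix} 1 & 0 \\ -1 & 1\end{smallmatrix}\right) \AND \phi(\t) = \left(\begin{smallmatrix} -1 & 0 \\ 2 & -1\end{smallmatrix}\right), \]
and after absorbing the sign of $\phi(\t)$ in $\mathrm{PSL}_2$ both lie in the cyclic subgroup $\left\{\left(\begin{smallmatrix} 1 & 0 \\ c & 1\end{smallmatrix}\right) : c \in \mathbb{F}_5\right\}$ of order $5$. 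Since the lower-left entries $-1$ and $-2$ generate $\mathbb{F}_5$ additively, $\phi(\Delta_0)$ is exactly this cyclic group. Combined with $[H_0 : \Delta_0] = 12$ from Lemma \ref{the other piece}, this forces $\Omega_0 \Delta_0 = \Delta_0$, hence $\Omega_0 \subset \Delta_0$ of index $5$.

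Parts (1)--(3) then drop out of a reduction of the matrices for $\p_1,\p_2,\p_3,\p_4$ listed above Proposition \ref{boundary}. Each has integer entries, so $\phi(\p_k)$ is computed by ordinary mod-$5$ reduction, yielding $\phi(\p_2) = \phi(\p_4) = I$ and $\phi(\p_1) = \phi(\p_3) = \left(\begin{smallmatrix} 1 & 0 \\ 1 & 1\end{smallmatrix}\right)$, an element of order $5$. Because $\Lambda$ is generated by any three of the $\p_k$ (Proposition \ref{boundary}), it follows that $\phi(\Lambda)$ is the cyclic order-$5$ group generated by this common image, and this immediately gives the claims $[\Lambda : \Lambda_0] = 5$, $\p_2,\p_4 \in \Lambda_0$, and $\p_1,\p_3$ projecting to generators of $\Lambda/\Lambda_0$.

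The main subtlety, and the point where a wrong choice would sink the argument, is the selection of the prime $\pi = 2-i$ rather than its Galois conjugate $2+i$. Reducing via $i \mapsto 3$ would instead produce $\phi(\t) = \left(\begin{smallmatrix} 1 & 4 \\ 3 & 3\end{smallmatrix}\right)$ of trace $-1$ and order $3$; together with the order-$5$ image of $\phi(\s)$ this would force $|\phi(\Delta_0)|$ to be divisible by $15$. Since $A_5$ has no proper subgroup whose order is a multiple of $15$, $\phi(\Delta_0)$ would then be all of $A_5$ and $\ker\phi$ would fail to lie in $\Delta_0$. The asymmetry between the two primes above $5$ is exactly what makes the construction succeed, and identifying the correct one is the only piece of the argument that is not a routine calculation.
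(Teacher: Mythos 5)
Your proof is correct and follows essentially the same route as the paper: the paper also constructs $\Omega_0$ as the kernel of reduction modulo the Gaussian prime above $5$ with $i\equiv 2$ (it writes the ideal as $(1+2i)$, which is an associate of your $2-i$ since $i(2-i)=1+2i$), checks surjectivity on $H_0$ via $\mathrm{PSL}_2(\mathbb{Z})$, identifies $\phi(\Delta_0)$ and $\phi(\Lambda)$ with the order-$5$ parabolic subgroup, and concludes by the same index count. Your closing observation that the conjugate prime $(2+i)$ would fail is a nice sanity check not made explicit in the paper, but the substance of the argument is the same.
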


\begin{proof}  In the ring of Gaussian integers $5=(1+2i)(1-2i)$.  So, restricting the map $\Z[i] \to \mathbb{Z}[i]/(1+2i)$ to $\mathbb{Z}$ gives a ring epimorphism $\mathbb{Z} \to \mathbb{Z}[i]/(1+2i)$.  The quotient ring $\mathbb{Z}[i]/(1+2i)$ is isomorphic to $\mathbb{Z}/5\mathbb{Z}$ and we obtain a group epimorphism $\mathrm{PSL}_2(\mathbb{Z}[i]) \to \mathrm{PSL}_2(\mathbb{Z}/5\mathbb{Z})$ which restricts to an epimorphism $\mathrm{PSL}_2(\mathbb{Z}) \to \mathrm{PSL}_2(\mathbb{Z}/5\mathbb{Z})$.  Since $\mathrm{PSL}_2(\mathbb{Z})<H_0< \mathrm{PSL}_2(\mathbb{Z}[i])$, the restriction to $H_0$ is also onto and the kernel $\Omega_0$ of this map has index $|\mathrm{PSL}_2(\mathbb{Z}/5\mathbb{Z})|=60$ in $H_0$.

Using the explicit descriptions of $\s$ and $\t$ from Section \ref{extension}, we see that $\Delta_0$ maps onto the parabolic subgroup $\left\{\left(\begin{smallmatrix} 1 & 0 \\ * & 1\end{smallmatrix}\right)\right\}$ of $\mathrm{PSL}_2(\mathbb{Z}[i]/(1+2i))$ which has order $5$.  Hence, $\Delta_0 \cap \Omega_0$ has index five in $\Delta_0$.  Since $[H_0:\Delta_0]=12$, it follows that $[H_0:\Delta_0\cap\Omega_0] = 60$. Therefore, $\Delta_0$ contains $\Omega_0$.

Similarly, the explicit descriptions of the $\p_j$'s from Section \ref{extension} show that $\Lambda$ maps onto this same parabolic subgroup and $[\Lambda:\Lambda_0]=5$.  The final assertion is also immediate from these descriptions.\end{proof}

\begin{lemma}\label{the orbifold piece}  Let $\calp_{T_0}$ be the polyhedron bounded by $\partial\calb_0$, $\partial\calb_1$, $i\calh$ and $i\calh+1/2$.  The orientation-preserving subgroup $H_{T_0}$ of the group generated by reflections in the sides of $\calp_{T_0}$ is a Kleinian group such that
\begin{enumerate}
\item $ \displaystyle H_{T_0} = \langle \sfa_0,\sfa_1,\f_0\,|\, \sfa_0^3=\sfa_1^3 = 1, (\sfa_0\sfa_1^{-1})^2 = (\sfa_0\f_0)^2 = (\sfa_1\f_0)^2 = 1\rangle$,
\item $\bound C(H_{T_0})$ consists of a pair of totally geodesic surfaces,
\item $\mathrm{PSL}_2(\mathbb{Z})$ and $\Gamma_{T_0}$ are subgroups of $H_{T_0}$, and
\item $[H_{T_0}:\Gamma_{T_0}] = [\mathrm{Stab}_{H_{T_0}}(\calh):\Lambda] = 12$.
\end{enumerate}
 \end{lemma}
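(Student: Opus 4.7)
My plan is to follow closely the strategies of Lemmas \ref{Hn} and \ref{the other piece}. Letting $\sfr'$ denote reflection across $i\calh+1/2$, I form the doubled polyhedron $\calp_{T_0}\cup\sfr'(\calp_{T_0})$. Its six faces lie on $i\calh$, $i\calh+1$, $\partial\calb_0$, $\sfr'(\partial\calb_0)$, $\partial\calb_1$, and $\sfr'(\partial\calb_1)$, and the dihedral-angle computations already carried out in the proof of Lemma \ref{Hn} transfer directly: $\partial\calb_k$ meets $\sfr'(\partial\calb_k)$ at $2\pi/3$, the two $\partial\calb_k$'s meet each other at $\pi/2$, and each hemisphere face meets $i\calh$ and $i\calh+1$ at right angles. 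I would propose $\{\f_0,\sfa_0,\sfa_1\}$ as a face-pairing, with $\f_0$ sending $i\calh$ to $i\calh+1$ and $\sfa_k$ sending $\sfr'(\partial\calb_k)$ to $\partial\calb_k$. Tallying the edge cycles generated by these pairings should yield precisely the relations $\sfa_0^3=\sfa_1^3=(\sfa_0\sfa_1^{-1})^2=(\sfa_0\f_0)^2=(\sfa_1\f_0)^2=1$, and an application of Poincar\'e's polyhedron theorem then establishes (1) and certifies that $H_{T_0}$ is Kleinian with $\calp_{T_0}\cup\sfr'(\calp_{T_0})$ as fundamental domain.

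Unlike $\calq_n$, the polyhedron $\calp_{T_0}$ has infinite volume, extending without bound in the $\pm i$ imaginary directions. For (2), I would argue that the convex hull of the limit set of $H_{T_0}$ is confined to the slab between the planes $\calh$ and $\calh-i\sqrt{2}$, following the convex-hull argument at the end of the proof of Lemma \ref{Hn}. The intersection of $\calp_{T_0}\cup\sfr'(\calp_{T_0})$ with this slab should be covered by finitely many of the ideal octahedra and cuboctahedra from \cite{CheDe}, whose ideal vertices are parabolic fixed points of $H_{T_0}$ and hence in its limit set. The two extra faces introduced by the slab, lying in $\calh$ and $\calh-i\sqrt{2}$, then project to the two totally geodesic components of $\bound C(H_{T_0})$. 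For (3), the inclusion $\mathrm{PSL}_2(\Z)=\langle \sfa_0,\f_0\rangle\subset H_{T_0}$ is immediate, while $\Gamma_{T_0}\subset H_{T_0}$ follows from formulas (8)--(9) of \cite{CheDe} (as invoked in the proof of Lemma \ref{Hn}), which express the generators $\f,\g,\h$ of $\Gamma_{T_0}$ as words in $\sfa_0,\sfa_1,\f_0$.

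For (4), I would obtain $[H_{T_0}:\Gamma_{T_0}]=12$ by volume comparison: $C(\Gamma_{T_0})=M_{T_0}$ has the volume of a right-angled ideal cuboctahedron, while by (2) the convex core $C(H_{T_0})$ is the quotient of the slab-restricted polyhedron described above and will have $1/12$ that volume. For the Fuchsian index, part (2) identifies one component of $\bound C(H_{T_0})$ as $\calh/\mathrm{Stab}_{H_{T_0}}(\calh)$, while the corresponding boundary of $M_{T_0}$ is $\calh/\Lambda$ by Proposition \ref{omnidef}(4); restricting the degree-$12$ orbifold cover $C(\Gamma_{T_0})\to C(H_{T_0})$ to these boundary components yields $[\mathrm{Stab}_{H_{T_0}}(\calh):\Lambda]=12$. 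The main obstacle I anticipate is the convex core identification underlying (2), since both ruling out additional edge cycles in the Poincar\'e verification and pinning down the convex hull of the limit set inside the slab require careful geometric bookkeeping with the four hemispheres $\partial\calb_0,\partial\calb_1,\sfr'(\partial\calb_0),\sfr'(\partial\calb_1)$ and their interplay with the vertical faces.
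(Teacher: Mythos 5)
Your proposal follows essentially the same route as the paper: Poincar\'e's theorem on the doubled polyhedron $\calp_{T_0}\cup\sfr'(\calp_{T_0})$ for the presentation, identification of $C(H_{T_0})$ with the quotient of $\calp_2\cap(\calp_{T_0}\cup\sfr'(\calp_{T_0}))$ for the boundary claim, formulas (8)--(9) of \cite{CheDe} for the subgroup inclusions, and volume comparison for the indices. The only step you assert rather than justify is the ratio $1/12$ itself, which the paper obtains by showing $\langle\sfa_0,\sfa_1\rangle$ has order $12$ and acts freely and transitively on the ideal vertices of $\calp_2$ with the slab-restricted polyhedron as fundamental domain; the paper also checks explicitly that the two slab faces have their edges identified only to themselves, so that $\partial C(H_{T_0})$ really has two components.
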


\begin{proof}  
First, recall from just before Lemma \ref{Hn} and in the proof of Lemma \ref{Hn} that Claim (3) has already been established.

Now, for visual intuition, compare $\calp_{T_0}$ with the right-angled ideal cuboctahedron $\calp_2$.  The intersection $\calp_{T_0}\cap\calp_2$ is the portion of $\calp_{T_0}$ which lies between $\calh$ and $\calh-i \sqrt{2}$.  The intersection $\calp_{T_0} \cap \calp_2$ has two additional faces contained in $\calh$ and $\calh - i\sqrt{2}$ and a single ideal vertex at $\infty$.  These additional faces are either perpendicular to or disjoint from those of $\calp_{T_0}$.

Let $\mathcal{F}$ be the face of $\calp_{T_0}\cap\calp_2$ contained in $i\calh$, $\mathcal{A}_0$ its face in $\partial\calb_0$, and $\mathcal{A}_1$ the face in $\partial\calb_1$.  Let $\sfr'$ denote reflection across $i\calh+1/2$.  A fundamental domain for $H_{T_0}$ is the union $\calp_{T_0}\cup\sfr'(\calp_{T_0})$.  By construction, $\sfa_0, \sfa_1, \f_0 \in H_{T_0}$ and these isometries determine a face-pairing for this fundamental domain.  In particular, $\sfa_0(\sfr'(\mathcal{A}_0))=\mathcal{A}_0$, $\sfa_1(\sfr'(\mathcal{A}_1))=\mathcal{A}_1$, and $\f_0(\mathcal{F})=\sfr'(\mathcal{F})$.  Thus, $H_{T_0}$ is generated by $\sfa_0$, $\sfa_1$, and $\f_0$.  Upon noting that, for each $i\in\{0,1\}$, $\mathcal{A}_i$ intersects $\mathcal{F}$ and $\mathcal{A}_{1-i}$ perpendicularly and $\sfr'(\mathcal{A}_i)$ at an angle of $2\pi/3$, the above presentation comes from the usual edge-cycle relations.

We claim that $C(H_{T_0})$ is the quotient of $\calq_{T_0}\doteq\calp_2\cap(\calp_{T_0}\cup\sfr(\calp_{T_0}))$ by the face-pairing isometries, with totally geodesic boundary.  As remarked above, the faces $\calq_{T_0}\cap\calh$ and $\calq_{T_0}\cap(\calh-i\sqrt{2})$ of $\calq_{T_0}$ intersect the others perpendicularly, so $\calq_{T_0}$ projects under the quotient map $\mathbb{H}^3/H_{T_0}$ to a suborbifold with totally geodesic boundary.  This is contained in $C(H_{T_0})$, as $\calq_{T_0}\subset\calp_2$, which by Corollary 2.3 of \cite{CheDe} is contained in the convex hull of the limit set of $\Gamma_{T_0}$ which is contained in the convex hull of the limit set of $H_{T_0}$.  Arguing as in the proof of Lemma 2.1 of \cite{CheDe} gives the reverse inclusion and hence the claim.

That $[H_{T_0}:\Gamma_{T_0}] = 12$ follows from volume considerations.  Note that $\sfa_0$ and $\sfa_1$ each preserve $\calp_2$ and fix the point $(A_0\cap A_1)\cap(\sfr(A_0)\cap\sfr(A_1))$; thus the group they generate has these properties as well.  Since $\calq_{T_0}$ contains a neighborhood in $\calp_2$ of its ideal vertex $\infty$, $\langle \sfa_0,\sfa_1\rangle$ acts freely on the set ideal vertices of $\calp_2$.  It is not hard to show directly that this action is transitive.  Since $\calp_2$ has twelve ideal vertices, its volume is twelve times that of $\calq_{T_0}$.  Since these are fundamental domains for $M_{T_0}$ and $C(H_{T_0})$ the associated cover has degree twelve.

Recall from above that the faces $\calq_{T_0}\cap\calh$ and $\calq_{T_0}\cap(\calh-i\sqrt{2})$ of $\calq_{T_0}$ project to $\bound C(H_{T_0})$.  The face-pairings of $\calq_{T_0}$ induce edge-pairings on these faces, and one checks directly that each face has its edges identified to each other.  It follows that $\bound C(H_{T_0})$ has two components.  Since $\bound M_{T_0}$ also has two components, each component of $\bound M_{T_0}$ covers twelve-to-one.  Since $\bound M_{T_0}$ has a component isometric to $F^{(0)}$, $[\mathrm{Stab}_{H_{T_0}}(\calh):\Lambda]=12$.\end{proof}

\begin{lemma}\label{permarep}  There is a homomorphism $\phi\co H_{T_0}\to S_{12}$ determined by
\begin{align*}
\phi({\sf a}_0)&=(1 \ 5 \ 9)(2 \ 6 \ 10)(3 \ 7 \ 11)(4 \ 8 \ 12)\\
\phi({\sf a}_1)&=(1 \ 8 \ 10)(2 \ 7 \ 9)(3 \ 6 \ 12)(4 \ 5 \ 11)\\
\phi({\sf f}_0)&=(1 \ 5 \ 11 \ 10 \ 3)(2 \ 7 \ 6 \ 8 \ 12)
\end{align*}
It has the following properties.
\begin{enumerate}
\item $\left| \phi(H_{T_0}) \right|=660$,
\item $\phi(\Gamma_{T_0})=\langle \phi(\h), \phi(\f) \rangle \cong \Z_{11} \rtimes \Z_5$, and
\item\label{psha} $\phi(\Lambda)=\langle \phi(\f) \rangle = \phi(\Gamma_{T_0})\cap\phi(\sfm_1\Gamma_{T_0}\sfm_1^{-1})$ is the largest subgroup of $\phi(\Gamma_{T_0})$ normalized by $\phi(\mathsf{m}_1)$.
\end{enumerate}
\end{lemma}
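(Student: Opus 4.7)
The plan is as follows. First, to establish that $\phi$ is a well-defined homomorphism, verify directly in $S_{12}$ that the prescribed images $\phi(\sfa_0)$, $\phi(\sfa_1)$, $\phi(\f_0)$ satisfy the relations from the presentation of $H_{T_0}$ in Lemma \ref{the orbifold piece}:
\[ \phi(\sfa_0)^3 = \phi(\sfa_1)^3 = (\phi(\sfa_0)\phi(\sfa_1)^{-1})^2 = (\phi(\sfa_0)\phi(\f_0))^2 = (\phi(\sfa_1)\phi(\f_0))^2 = 1. \]
This is a direct permutation computation in $S_{12}$.

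For (1), note that $660 = |\mathrm{PSL}_2(\mathbb{F}_{11})|$, and I expect $\phi(H_{T_0})$ to be isomorphic to $\mathrm{PSL}_2(\mathbb{F}_{11})$ realized by its natural $2$-transitive action on the twelve points of $\mathbb{P}^1(\mathbb{F}_{11})$. To verify the order, exhibit elements in $\phi(H_{T_0})$ of orders $2$, $3$, $5$, and $11$ (forcing the order to be a multiple of $\mathrm{lcm}(2,3,5,11)=330$) and rule out $\phi(H_{T_0})$ being all of $A_{12}$ or $S_{12}$, for instance by checking that $\phi(H_{T_0})$ preserves some combinatorial structure or by a direct coset enumeration bounding the order above by $660$.

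For (2), use the explicit expressions of the generators $\f,\g,\h$ of $\Gamma_{T_0}$ as words in $\sfa_0, \sfa_1, \f_0$ provided by equations (8)--(9) above Proposition 6.3 of \cite{CheDe} (cited in the proof of Lemma \ref{Hn}) to compute $\phi(\f),\phi(\g),\phi(\h)\in S_{12}$. Check that $\phi(\g)\in\langle\phi(\h),\phi(\f)\rangle$, reducing the generating set to $\{\phi(\h),\phi(\f)\}$. Compute the orders of $\phi(\h)$ and $\phi(\f)$ (expecting $11$ and $5$, respectively) and confirm that $\phi(\f)\phi(\h)\phi(\f)^{-1}$ is a nontrivial power of $\phi(\h)$, so $\langle\phi(\h)\rangle\triangleleft\langle\phi(\h),\phi(\f)\rangle$, giving the structure $\Z_{11}\rtimes\Z_5$ of order $55$.

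For (3), first apply $\phi$ to the representatives of the parabolic classes in $\Lambda$ listed just before Proposition \ref{boundary}, using the $\f,\g,\h$-expressions given there, and verify directly that each $\phi(\p_k)$ lies in $\langle\phi(\f)\rangle$; equality $\phi(\Lambda)=\langle\phi(\f)\rangle$ follows since $\phi(\p_1)=\phi(\f)^{-1}$ already generates it. Next, since $\sfm_1\in\mathrm{PSL}_2(\Z)=\langle\sfa_0,\f_0\rangle$ (observed just before Lemma \ref{Hn}), write $\sfm_1$ explicitly as a word in $\sfa_0,\f_0$ and compute $\phi(\sfm_1)\in S_{12}$. By Lemma \ref{mutator}, $\sfm_1$ normalizes $\Lambda$, so $\phi(\sfm_1)$ automatically normalizes $\phi(\Lambda)=\langle\phi(\f)\rangle$. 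For the maximality assertion, note that the proper subgroups of $\Z_{11}\rtimes\Z_5$ are the trivial subgroup, the unique (hence characteristic) order-$11$ subgroup, and the Sylow $5$-subgroups; therefore the only subgroup of $\phi(\Gamma_{T_0})$ strictly larger than $\langle\phi(\f)\rangle$, other than $\phi(\Gamma_{T_0})$ itself, is $\langle\phi(\h)\rangle$. It suffices to verify directly that $\phi(\sfm_1)\phi(\h)\phi(\sfm_1)^{-1}\notin\langle\phi(\h)\rangle$, which rules out normalization of $\langle\phi(\h)\rangle$ and, by characteristicness, of $\phi(\Gamma_{T_0})$ as well. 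Finally, $\phi(\Lambda)$ lies in $\phi(\Gamma_{T_0})\cap\phi(\sfm_1\Gamma_{T_0}\sfm_1^{-1})$, and since this intersection is itself normalized by $\phi(\sfm_1)$, maximality forces equality with $\langle\phi(\f)\rangle$.

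The main obstacle will be the bookkeeping: writing $\h$ and $\sfm_1$ explicitly as words in $\sfa_0,\sfa_1,\f_0$ and then reliably computing the resulting products in $S_{12}$. The structural assertions, once the image is identified with $\mathrm{PSL}_2(\mathbb{F}_{11})$ and $\phi(\Gamma_{T_0})$ with $\Z_{11}\rtimes\Z_5$, reduce to a handful of explicit permutation-conjugation checks that exploit the very restricted subgroup structure of a group of order $55$.
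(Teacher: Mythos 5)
Your treatment of well-definedness and of parts (2) and (3) tracks the paper's proof closely: both verify the relations of Lemma \ref{the orbifold piece} in $S_{12}$, both compute $\phi(\f),\phi(\g),\phi(\h),\phi(\sfm_1)$ from explicit words in $\sfa_0,\sfa_1,\f_0$, and both settle (3) by showing the intersection $\phi(\Gamma_{T_0})\cap\phi(\sfm_1\Gamma_{T_0}\sfm_1^{-1})$ is proper, hence of order $5$ in a group of order $55$, hence equal to $\langle\phi(\f)\rangle$, while every subgroup normalized by $\phi(\sfm_1)$ lies in that intersection. (Your intermediate claim that $\langle\phi(\h)\rangle$ is the only subgroup ``strictly larger'' than $\langle\phi(\f)\rangle$ besides the whole group conflates larger order with containment and would, on its own, leave open the possibility that $\phi(\sfm_1)$ normalizes a second Sylow $5$-subgroup; your closing intersection argument renders this moot, so long as you note that properness of the intersection already follows from $\phi(\sfm_1\h\sfm_1^{-1})\notin\phi(\Gamma_{T_0})$.)

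The genuine gap is in part (1). Elements of orders $2,3,5,11$ give only $330\mid\left|\phi(H_{T_0})\right|$, and even adding transitivity (which you do not invoke) only upgrades this to $660\mid\left|\phi(H_{T_0})\right|$; the real issue is the upper bound, and ruling out $A_{12}$ and $S_{12}$ is nowhere near sufficient. The symmetric group $S_{12}$ contains transitive proper subgroups of order a proper multiple of $660$, for instance $M_{12}$, $M_{11}$ in its degree-$12$ action, and $\mathrm{PGL}_2(\mathbb{F}_{11})$ acting on $\mathbb{P}^1(\mathbb{F}_{11})$, so your criterion cannot pin the order to $660$. Nor is coset enumeration available in the form you suggest: the presentation of $H_{T_0}$ defines an infinite group, and you have no presentation of the image. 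What the paper does instead, and what your sketch is missing, is to prove $\ker\phi<\Gamma_{T_0}$, so that the $12$ left cosets of $\Gamma_{T_0}$ have disjoint images and $\left|\phi(H_{T_0})\right|=12\cdot\left|\phi(\Gamma_{T_0})\right|=12\cdot 55=660$. This rests on identifying $\Gamma_{T_0}$ with $\mathrm{Stab}_{H_{T_0}}(1)$: the order-$12$ group $\langle\sfa_0,\sfa_1\rangle$ is a transversal for $\Gamma_{T_0}$ in $H_{T_0}$ (it acts freely and transitively on the ideal vertices of $\calp_2$, and $\Gamma_{T_0}$ is torsion-free), and an auxiliary homomorphism $\psi\co\langle\sfa_0,\sfa_1\rangle\to A_4$ arising from the partition $\{1,5,9\},\{8,2,11\},\{3,6,12\},\{4,7,10\}$ preserved by $\phi(\langle\sfa_0,\sfa_1\rangle)$ shows that no nontrivial element of this transversal fixes $1$. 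Some argument of this kind, or an honest stabilizer-chain computation of the order of the subgroup of $S_{12}$ generated by the three given permutations, is needed to close part (1); note also that the paper's identification $\phi(\Gamma_{T_0})=\mathrm{Stab}_{\phi(H_{T_0})}(1)$ is reused in its proof of part (3).
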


\begin{remark}\label{coset rep}  The homomorphism $\phi$ above is the permutation representation of $H_{T_0}$ given by its action on the left cosets of $\Gamma_{T_0}$.  This fact is not needed in the proof below or the rest of the paper.  \end{remark}

\proof
That $\phi$ is a homomorphism follows from the presentation for $H_{T_0}$ given in Lemma \ref{the orbifold piece}.  Our expressions for ${\sf a}_0, {\sf a}_1, {\sf f}_0, {\sf f}, {\sf g}, {\sf h}$, and ${\sf m}_1$ as matrices make it easy to verify the equalities
\begin{align*}
{\sf f} &={\sf a}_0 {\sf f}_0 {\sf a}_0^{-1} & 
{\sf g} &=\left( {\sf a}_0^{-1} {\sf a}_1\right) {\sf f}_0^{-1} \left( {\sf a}_0^{-1} {\sf a}_1\right)^{-1} \\
 {\sf h}&= {\sf a}_1 {\sf a}_0 {\sf f}_0^{-1} {\sf a}_1 &
 {\sf m}_1&=\left( {\sf f}_0 {\sf a}_0^{-1}\right)^2 {\sf f}_0^{-1}.
\end{align*} 
This gives
\begin{align*}
\phi({\sf f}) &= ( 2 \ 7 \ 5 \ 9 \ 3)(4 \ 6 \ 11 \ 10 \ 12) & \phi({\sf g})&=(2 \ 9 \ 12 \ 7 \ 4)(3 \ 11 \ 6 \ 8 \ 5)\\
\phi({\sf h}) &=(2 \ 12 \ 7 \ 8 \ 6 \ 3 \ 4 \ 11 \ 10 \ 9 \ 5) & \phi({\sf m}_1)&=(1 \ 8)(2 \ 12) (3 \ 4) (5 \ 11)(6 \ 9)(7 \ 10).
\end{align*}
We see that $\phi(\Gamma_{T_0})=\langle \phi(\h), \phi(\f) \rangle \cong \Z_{11} \rtimes \Z_5$, because $\phi({\sf g})=\phi({\sf fh}^{-1})$ and $\phi({\sf fhf}^{-1})=\phi({\sf h}^4)$.

Under the action of $H_{T_0}$ on $\Z_{12}$ given by $\phi$, $\Gamma_{T_0}$ is a subgroup of $\text{Stab}_{H_{T_0}}(1)$. We claim that, in fact, these groups are equal.  Let 
\begin{align*}
C &= \{ 1, 5, 9\} &
D &=\{ 8, 2, 11\}\\
E &=\{ 3, 6, 12\} &
F &= \{ 4, 7, 10\}\\
\end{align*}
and observe that $\phi \langle {\sf a}_0, {\sf a}_1 \rangle$ preserves the triples $C, D, E$, and $F$.  This gives a homomorphism $\psi \co \langle {\sf a}_0, {\sf a}_1 \rangle \to S_4$ with 
\begin{align*}
\psi({\sf a}_0)&=(D \ E \ F) &
\psi({\sf a}_1)&=(C \ D \ F).
\end{align*}
Since these two elements generate $A_4$ we have that the image of $\psi$ is the order 12 group $A_4$.  The group $\langle {\sf a}_0, {\sf a}_1 \rangle$ also acts by isometry on the polyhedron $\mathcal{P}_2$ and acts freely and transitively on its set of ideal vertices.  Hence $\left| \langle {\sf a}_0, {\sf a}_1 \rangle \right|=12$ and $\psi \co \langle {\sf a}_0, {\sf a}_1 \rangle \to A_4$ is an isomorphism.

Since $\left|\langle {\sf a}_0, {\sf a}_1\rangle\right|=12$ and $\Gamma_{T_0}$ is torsion-free, the elements of $\langle {\sf a}_0, {\sf a}_1\rangle$ make up a complete set of representatives for the left cosets of $\Gamma_{T_0}$.  If ${\sf k} \in \text{Stab}_{H_{T_0}}(1)$ then ${\sf k}={\sf a n}$, where ${\sf a} \in \langle {\sf a}_0, {\sf a}_1\rangle$ and ${\sf n} \in \Gamma_{T_0}$.  Then
\[ \phi({\sf a}) \cdot 1 = \phi({\sf a}) \phi({\sf n}) \cdot 1 = \phi({\sf k}) \cdot 1 = 1\]
and so we must also have $\psi({\sf a}) \cdot C =C$.  If we list the elements of $A_4$, we see that the only possibilities for ${\sf a}$ are the identity or ${\sf a}_0^{\pm 1}$.  Since ${\sf a}_0^{\pm 1}$ do not fix $1$, we must have ${\sf k} \in \Gamma_{T_0}$ as claimed.

We know now that $\ker \phi < \Gamma_{T_0}$, which implies that the images of distinct left cosets of $\Gamma_{T_0}$ in $H_{T_0}$ have empty intersection in $\phi(H_{T_0})$.  Therefore, $\left| \phi(H_{T_0}) \right|=55 \cdot 12=660$.

Our formulas for the ${\sf p}_j$'s in terms of ${\sf f, g}$, and ${\sf h}$ give $\phi({\sf p}_1) = \phi({\sf p}_3)=\phi({\sf f}^{-1})$ and $\phi({\sf p}_2)=\text{id}$, so $\phi(\Lambda)=\langle \phi({\sf f}^{-1}) \rangle$.  Since $\sfm_1$ normalizes $\Lambda$, $\phi({\sf m_1})$ normalizes $\phi(\Lambda)$; and indeed we have that $\phi({\sf m}_1 {\sf f} {\sf m}_1^{-1})=\phi({\sf f}^{-1})$.  On the other hand, $\phi({\sf m}_1 {\sf h}\sfm_1^{-1})$ does not stabilize $1$ while $\phi(\Gamma_{T_0})$ does, so $\phi(\Gamma_{T_0})\cap\phi(\sfm_1\Gamma_{T_0}\sfm_1^{-1})$ is properly contained in $\phi(\Gamma_{T_0})$.  This intersection contains $\phi(\Lambda)$, so since $\left| \phi(\Gamma_{T_0}) \right|=55$ the intersection equals $\phi(\Lambda)$.  It moreover follows that $\phi(\Lambda)$ is the largest subgroup of $\phi(\Gamma_{T_0})$ normalized by $\phi(\mathsf{m}_1)$.
\endproof  

Define $\Omega_{T_0} = \phi^{-1}(\phi \Lambda)$, and let $p_{T_0} \co \mathbb{H}^3/\Omega_{T_0} \to \mathbb{H}^3/\Gamma_{T_0}$ be the corresponding cover.  If we let $\theta \co H_0 \to \text{PSL}_2 (\Z[i]/(1+2i))$ be the map used in the proof of Lemma \ref{algebra} and identify $\theta(\Lambda)$ and $\phi(\Lambda)$ with the isomorphism $\phi \f \mapsto \left( \begin{smallmatrix} 1 & 0 \\ 4 & 1 \end{smallmatrix} \right)$, we see that $\theta=\phi$.  In particular, $\Lambda_0 = \Lambda\cap\ker\phi$.

\begin{lemma}\label{the T_0 cover}
The preimage of $p_{T_0}^{-1}(F^{(0)})$ has three components.  One is the inclusion-induced image of $F^{(0)}$ in $\mathbb{H}^3/\Omega_{T_0}$, which projects isometrically to $\mathbb{H}^3/\Gamma_{T_0}$.  The other two are respectively isometric to $\g(\calh)/(\g\Lambda_0\g^{-1})$ and $\g^{-1}(\calh)/(\g^{-1}\Lambda_0\g)$, each of which projects five-to-one into $\mathbb{H}^3/\Gamma_{T_0}$.
\end{lemma}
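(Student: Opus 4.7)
The plan is to parametrize components of $p_{T_0}^{-1}(F^{(0)})$ by double cosets and reduce the count to a finite computation in $\phi(\Gamma_{T_0}) \cong \Z_{11}\rtimes\Z_5$ via Lemma \ref{permarep}. First note that $\Omega_{T_0}<\Gamma_{T_0}$: the proof of Lemma \ref{permarep} shows $\ker\phi<\Gamma_{T_0}$, so $\Omega_{T_0} = \phi^{-1}(\phi\Lambda)\subset\Gamma_{T_0}$.  Since $F^{(0)} = \calh/\Lambda$ and $\Lambda$ is the full stabilizer of $\calh$ in $\Gamma_{T_0}$, standard covering theory identifies the components of $p_{T_0}^{-1}(F^{(0)})$ with the double coset space $\Omega_{T_0}\backslash\Gamma_{T_0}/\Lambda$.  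Because $\phi$ is surjective onto $\phi(\Gamma_{T_0})$ with kernel contained in $\Omega_{T_0}$, there is a natural bijection with $\phi(\Lambda)\backslash\phi(\Gamma_{T_0})/\phi(\Lambda)$.

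I would then count this set via the left action of $\phi(\Lambda) = \langle\phi(\f)\rangle$ on the coset space $\phi(\Gamma_{T_0})/\phi(\Lambda)$, a set of $11$ elements parametrized by $k \mapsto \phi(\h)^k\phi(\Lambda)$.  The relation $\phi(\f\h\f^{-1}) = \phi(\h^4)$ recorded in the proof of Lemma \ref{permarep} implies this action is $k\mapsto 4k\pmod{11}$, whose orbits are $\{0\}$, $\{1,3,4,5,9\}$, and $\{2,6,7,8,10\}$.  Thus there are exactly three components.

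To match representatives to orbits, observe that the trivial double coset contributes the image of $\calh$ with stabilizer $\Lambda\cap\Omega_{T_0} = \Lambda$, yielding the component $\calh/\Lambda \cong F^{(0)}$ that maps isometrically.  For the other two, using $\phi(\g) = \phi(\f\h^{-1})$ (also from the proof of Lemma \ref{permarep}), a short calculation gives $\phi(\g)\phi(\Lambda) = \phi(\h)^7\phi(\Lambda)$ and $\phi(\g^{-1})\phi(\Lambda) = \phi(\h)\phi(\Lambda)$, which lie in the two distinct non-trivial orbits.  Hence $\g$ and $\g^{-1}$ represent the remaining double cosets, and the corresponding components take the form $\g^{\pm 1}(\calh)/(\g^{\pm 1}\Lambda\g^{\mp 1}\cap\Omega_{T_0})$.

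Finally, to simplify these stabilizers, I would show that $\phi(\g)$ does not normalize $\phi(\Lambda)$: conjugating $\phi(\f)$ by $\phi(\g)$ produces a permutation whose support contains the symbol $8$, which is absent from the support of every nontrivial element of $\phi(\Lambda)$.  Since any nontrivial power of this conjugate has the same support (the exponents $1,\ldots,4$ are coprime to $5$), one concludes $\phi(\g)\phi(\Lambda)\phi(\g)^{-1}\cap\phi(\Lambda) = \{1\}$.  So $\g\lambda\g^{-1}\in\Omega_{T_0}$ forces $\phi(\lambda)=1$, that is, $\lambda\in\Lambda_0$, and we get $\g\Lambda\g^{-1}\cap\Omega_{T_0} = \g\Lambda_0\g^{-1}$; the symmetric computation handles $\g^{-1}$.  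The projection on each of these two components then has degree $[\Lambda:\Lambda_0] = 5$ by Lemma \ref{algebra}.  The main delicate step is the explicit cycle verification that $\phi(\g)$ fails to normalize $\phi(\Lambda)$; everything else is a clean orbit count.
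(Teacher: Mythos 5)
Your argument is correct, and it lands on exactly the same finite computation in $\phi(\Gamma_{T_0})\cong\Z_{11}\rtimes\Z_5$ as the paper, but it gets there by a different bookkeeping device. The paper first passes to the intermediate regular cover $\wt{N}$ of $M_{T_0}$ corresponding to $\ker\phi$, identifies the eleven boundary components over $\bound_-M_{T_0}$ as the translates $h^j(\wt{S})$ of $\wt{S}=\calh/\Lambda_0$, and then quotients by the deck subgroup $\langle f\rangle$, so that the three components are the orbits of $j\mapsto 4j \pmod{11}$; because $\ker\phi$ is normal, the stabilizers $\g^{\pm1}\Lambda_0\g^{\mp1}$ and the isometry types come for free from the injectivity of the projection $\wt{N}\to C(\Omega_{T_0})$ on the nontrivial orbits. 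You instead invoke the double-coset parametrization $\Omega_{T_0}\backslash\Gamma_{T_0}/\Lambda\cong\phi(\Lambda)\backslash\phi(\Gamma_{T_0})/\phi(\Lambda)$ and perform the identical orbit count (your identification of $\g$ and $\g^{-1}$ with the cosets $h^7\langle f\rangle$ and $h\langle f\rangle$ is exactly the paper's), but you must then pay separately for the stabilizers, which is why you need the extra verification that $\phi(\g)\langle\phi(\f)\rangle\phi(\g)^{-1}\cap\langle\phi(\f)\rangle$ is trivial. Your support-of-the-symbol-$8$ argument for that step is valid; a slicker route is to note that $\langle\phi(\f)\rangle$ is a Sylow $5$-subgroup of the nonabelian group $\phi(\Gamma_{T_0})$ of order $55$, hence self-normalizing with all eleven conjugates pairwise intersecting trivially, and $\phi(\g)=h^7f\notin\langle f\rangle$. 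The trade-off is a wash: your version is more self-contained group theory, while the paper's version avoids the stabilizer computation at the cost of introducing the auxiliary cover $\wt{N}$.
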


\begin{proof}  Lemma \ref{permarep} implies that $[\Gamma_{T_0}:\Omega_{T_0}]=11$.  Fact \ref{core covers} implies that $p_{T_0}$ restricts to a covering map $C(\Omega_{T_0}) \to M_{T_0}$.  By definition of $\Omega_{T_0}$, $\Lambda$ is a subgroup of $\Omega_{T_0}$ and corresponds to a component of $\bound C(\Omega_{T_0})$ that covers $\bound_- M_{T_0}$ one-to-one.  

To understand the entire preimage $p_{T_0}^{-1}(\bound_- M_{T_0})$ we pass to the cover $\widetilde{N}\to M_{T_0}$ corresponding to $\ker\phi$.  Let $f = \phi(\f)$ and $h = \phi(\h)$, so the deck group of the cover is $\phi(\Gamma_{T_0})=\langle f, h\rangle$.  $\Lambda_0$ is the stabilizer of $\calh$ in $\ker \phi$, so $\wt{S}=\calh/\Lambda_0$ is a boundary component of $\wt{N}$.  Since $[\Lambda: \Lambda_0]=5$, $\wt{S}$ is a 5-fold cover of $\bound_- M_{T_0}$.

Locate a base point $x\in\bound_- M_{T_0}$ so that elements of $\Lambda = \pi_1(\bound_- M_{T_0},x)$ are represented by loops in $\bound_- M_{T_0}$, and let $\tilde{x}\in \wt{S}$ be in the preimage of $x$.  As usual, our choice of the point $\tilde{x}$ gives an action of $\pi_1(M_{T_0},x)$ on $\wt{N}$.  By Lemma \ref{permarep}, $f \in \phi(\Lambda)$, so $f^k.\tilde{x}\in \wt{S}$ for every $k$.  Moreover, for each fixed $j$, $(h^jf^k).\tilde{x}$ and $(h^jf^{k'}).\tilde{x}=(h^jf^k).(f^{k'-k}.\tilde{x})$ occupy the same component of $\partial\widetilde{N}$ for any $k$ and $k'$.  This means that $\partial\widetilde{N}$ has eleven components covering $\bound_- M_{T_0}$.   Each component is of the form $h^j(\wt{S})$ for some $j\in\{0,\hdots,10\}$ and contains the orbit of $\tilde{x}$ under the left coset $h^j\langle f\rangle$.  Recall from the proof of \ref{permarep} that $fh = h^4f$.  Therefore, $\langle f \rangle$ acts on the components of the preimage of $\bound_- M_{T_0}$ in $\partial\widetilde{N}$.  Its three orbits are 
$$\{\wt{S}\}\quad\{h(\wt{S}),h^4(\wt{S}),h^5(\wt{S}),h^9(\wt{S}),h^3(\wt{S})\}\quad\{h^2(\wt{S}),h^8(\wt{S}),h^{10}(\wt{S}),h^7(\wt{S}),h^6(\wt{S})\}.$$
Because $C(\Omega_{T_0})$ is the quotient of $\widetilde{N}$ by the action of $\langle f\rangle$, any component of the latter two orbits projects injectively to $p^{-1}(\bound_- M_{T_0})$.  From the proof of Lemma \ref{permarep}, we know that $\phi(\g) = fh^{-1} = h^7 f$ and hence $\phi(\g^{-1}) = hf^{-1}$.  Since $h(\wt{S})$ and $h^7(\wt{S})$ lie in different $\langle f\rangle$-orbits the lemma's final claim follows.\end{proof}

Let $\wt{M}_T$ be the double of $C(\Omega_{T_0})$ across $p_{T_0}^{-1}((S^2\times\{1\})-T_0)$ and let $R_T\co\widetilde{M}_T\to\widetilde{M}_T$ be the doubling involution.  It is straightforward to show that there is an $11$-fold cover $p_T \co \widetilde{M}_T \to M_T$ that restricts on $C(\Omega_{T_0})$ to $p_{T_0}$ such that $p_T\circ R_T = r_T\circ p_T$.  Let $\bound_- \wt{M}_T=p_T^{-1}(\bound_-M_T)$.

\begin{cor}\label{the hidden extension}  The mutation of $\bound_-M_T$ determined by $(1\,3)(2\,4)$ has a hidden extension $\Psi\co\widetilde{M}_{T}\to\widetilde{M}_T$ that preserves each component of $p_T^{-1}(\bound_-M_T)$ and commutes with $R_T$.\end{cor}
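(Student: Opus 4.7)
The plan is to construct $\Psi$ in two stages: first build a self-isometry $\widetilde{\Psi}_0$ of $C(\Omega_{T_0})$ induced by $\sfm_1\in H_{T_0}$, and then extend $\widetilde{\Psi}_0$ across the doubling that produces $\widetilde{M}_T$.

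First I verify that $\sfm_1$ normalizes $\Omega_{T_0}$. The formula $\sfm_1=(\f_0\sfa_0^{-1})^2\f_0^{-1}$ from the proof of Lemma \ref{permarep} shows $\sfm_1\in H_{T_0}$, so $\phi(\sfm_1)$ is defined. By Lemma \ref{permarep}(\ref{psha}), $\phi(\sfm_1)$ normalizes $\phi(\Lambda)$; hence for any $\omega\in\Omega_{T_0}=\phi^{-1}(\phi(\Lambda))$ one has $\phi(\sfm_1\omega\sfm_1^{-1})\in\phi(\Lambda)$, so $\sfm_1\omega\sfm_1^{-1}\in\Omega_{T_0}$. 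Thus $\sfm_1$ descends to an involution $\widetilde{\Psi}_0$ of $\mathbb{H}^3/\Omega_{T_0}$, which restricts to an isometry of $C(\Omega_{T_0})$ by Fact \ref{core covers}.

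To extend $\widetilde{\Psi}_0$ to $\widetilde{M}_T$, note that because $\sfm_1\in H_{T_0}$, $\widetilde{\Psi}_0$ covers the identity on $\mathbb{H}^3/H_{T_0}$, so it permutes components of $\partial C(\Omega_{T_0})$ within the preimage of each of the two components of $\partial C(H_{T_0})$ (Lemma \ref{the orbifold piece}(2)). In particular, $\widetilde{\Psi}_0$ preserves $p_{T_0}^{-1}(\bound_+ M_{T_0})$ setwise. I then define $\Psi|_{C(\Omega_{T_0})}=\widetilde{\Psi}_0$ and $\Psi|_{R_T(C(\Omega_{T_0}))}=R_T\circ\widetilde{\Psi}_0\circ R_T^{-1}$. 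These agree on the doubling locus, which is pointwise fixed by $R_T$ and setwise preserved by $\widetilde{\Psi}_0$, so $\Psi$ is a well-defined isometry, and $\Psi\circ R_T=R_T\circ\Psi$ by construction. On the component $\calh/\Lambda$ of $p_T^{-1}(\bound_- M_T)$, $\widetilde{\Psi}_0$ realizes the mutation $(1\,3)(2\,4)$ by Lemma \ref{mutator}. To see $\Psi$ is hidden, observe that if it descended to $\psi\co M_T\to M_T$ then $\psi|_{\bound_- M_T}$ would equal the mutation; an argument parallel to the one used in the proof of Theorem \ref{hidden extension}, invoking Mostow--Prasad rigidity together with the non-isometry statement from Theorem 2 of \cite{CheDe}, rules this out.

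The principal obstacle is verifying that $\widetilde{\Psi}_0$ preserves \emph{each} of the three components of $p_{T_0}^{-1}(\bound_- M_{T_0})$ given by Lemma \ref{the T_0 cover}, not merely their union. The distinguished component $\calh/\Lambda$ is fixed because $\sfm_1\in\mathrm{PSL}_2(\mathbb{Z})$ stabilizes $\calh$ and normalizes $\Lambda$; the other two (each of degree $5$) are either both fixed or swapped by the involution $\widetilde{\Psi}_0$. To rule out the swap I would work in the permutation representation: the eleven components in the preimage of $\bound_- M_{T_0}$ in $\wt N=\mathbb{H}^3/\ker\phi$ are enumerated as $\phi(\h)^j(\wt S)$ for $0\le j\le 10$, falling into the $\phi(\Lambda)=\langle\phi(\f)\rangle$-orbits of sizes $1,5,5$ that give the three components in $C(\Omega_{T_0})$. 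Using the explicit permutations in Lemma \ref{permarep} for $\phi(\sfm_1)$, $\phi(\f)$, and $\phi(\h)$, one verifies by direct computation that $\phi(\sfm_1)$, acting as a deck transformation of $\wt N$, preserves each of the two size-$5$ orbits.
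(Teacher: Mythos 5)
Your construction of $\Psi$ tracks the paper's own proof closely: you get $\sfm_1\in H_{T_0}$ normalizing $\Omega_{T_0}=\phi^{-1}(\phi\Lambda)$ from Lemma \ref{permarep}(\ref{psha}), an induced isometry of $C(\Omega_{T_0})$ via Fact \ref{core covers}, setwise preservation of $p_{T_0}^{-1}(\bound_\pm M_{T_0})$ from the fact that $\bound C(H_{T_0})$ has two components (Lemma \ref{the orbifold piece}), and the same doubling formula $R_T\circ\widetilde{\Psi}_0\circ R_T^{-1}$ on the reflected copy. The genuine gap is in your argument that $\Psi$ is \emph{hidden}. The argument of Theorem \ref{hidden extension} rules out an extension of the mutation over $B^3-T_n$, i.e.\ over everything on one side of the mutation sphere; that is exactly what permits re-gluing to identify $S^3-L_{m+n}$ with its mutant and contradict Theorem 2 of \cite{CheDe}. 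Here you need to rule out an extension over the \emph{middle} piece $M_T=(S^2\times I)-T$, which has a second boundary sphere $\bound_+M_T$. A hypothetical $\psi\co M_T\to M_T$ restricting to the mutation on $\bound_-M_T$ would induce some uncontrolled isometry of $\bound_+M_T$, so you cannot re-glue the remaining pieces of the link complement and conclude that $S^3-L_{m+n}$ is homeomorphic to its mutant; the ``parallel argument'' does not transfer as stated. What is actually needed, and what the paper invokes, is the fact that $\sfm_1$ does not normalize $\Gamma_T$ (last part of the proof of Proposition 6.6 of \cite{CheDe}): by rigidity any such $\psi$ would be induced by an element of $\sfm_1\Lambda$ normalizing $\Gamma_T$, since the centralizer of $\Lambda$ in $\mathrm{PSL}_2(\mathbb{C})$ is trivial. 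You should cite that fact rather than gesture at Theorem \ref{hidden extension}.

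On your ``principal obstacle'' (preserving each of the three components of $p_{T_0}^{-1}(\bound_-M_{T_0})$ rather than just their union): your proposed verification is harder than you make it sound, because $\phi(\sfm_1)$ does not normalize $\phi(\Gamma_{T_0})$ --- by Lemma \ref{permarep}(\ref{psha}) the intersection $\phi(\Gamma_{T_0})\cap\phi(\sfm_1)\phi(\Gamma_{T_0})\phi(\sfm_1)^{-1}$ is only $\langle\phi(\f)\rangle$ --- so $\phi(\sfm_1)\phi(\h)^j\phi(\sfm_1)^{-1}$ lands outside $\phi(\Gamma_{T_0})$ and does not directly identify which coset $h^{j'}\langle f\rangle$ the image component corresponds to; one must correct by elements of $\phi(\mathrm{Stab}_{H_{T_0}}(\calh))$, which is a double-coset computation, not a one-line check. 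The paper avoids this entirely (in the proof of Corollary \ref{over M_S}) by observing that $\g\sfm_1\g^{-1}=\left(\begin{smallmatrix}1 & -1\\ 2 & -1\end{smallmatrix}\right)$ is real, so $\sfm_1$ preserves the plane $\g^{-1}(\calh)$ and hence that component outright, and the third component is then preserved by elimination. That said, you correctly identified a point the paper's own proof of this corollary leaves to the following one.
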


\begin{proof}  
Since $\phi(\sfm_1)$ normalizes $\langle\phi(\f)\rangle$ in $S_{12}$, $\sfm_1$ normalizes $\Omega_{T_0}$.  So $\sfm_1$ induces a self-isometry $\Psi_0$ of $\mathbb{H}^3/\Omega_{T_0}$ which, by Fact \ref{core covers}, preserves $M_{T_0}$.   We claim that $\Psi_0$ preserves $p_{T_0}^{-1}((S^2\times\{j\}) - T_0)$ for $j\in \{0,1\}$.

From Lemma \ref{the orbifold piece}, we know that $\bound C(H_{T_0})$ has two components. So, under the branched cover $M_{T_0} \to C(H_{T_0})$, the images of the two components of $\bound M_{T_0}$ are distinct.  Recall that Proposition \ref{omnidef}(\ref{naturelle}) implies $\bound_- M_{T_0}=\calh/\Lambda$.  Since each component of $p_{T_0}^{-1}(\bound_-M_{T_0})$ is the quotient of a $\Gamma_{T_0}$-translate of $\calh$ by its stabilizer in $\Omega_{T_0}$, this means that the $H_{T_0}$- and $\Gamma_{T_0}$-orbits of $\calh$ are identical.  Since $\sfm_1\in\mathrm{PSL}_2(\mathbb{Z})<H_{T_0}$, the $\Gamma_{T_0}$-orbit of $\calh$ is perserved by $\sfm_1$.  This proves the claim.

Now, using the claim, we obtain an isometry $\Psi$ of $\widetilde{M}_T$ that commutes with $R_T$ and agrees with $\Psi_0$ on $C(\Omega_{T_0})$.   In the last part of the proof of Proposition 6.6 from \cite{CheDe}, we show that $\sfm_1$ does not normalize $\Gamma_T$.  Hence, $\Psi$ is a hidden symmetry of $M_T$ .  Since $\sfm_1$ normalizes $\Lambda$, $\Psi_0$ restricts to a lift of $(1\,3)(2\,4)$ on the component of $p_{T_0}^{-1}(\bound_-M_{T_0})$ which is the image of $\calh$.  Therefore, $\Psi_0$ is a hidden extension of this mutation over $C(\Omega_{T_0})$ and $\Psi$ is a hidden extension over $\widetilde{M}_T$. \end{proof}

\begin{cor}\label{over M_S}  There is a covering space $p_S\co\widetilde{M}_S\to M_S$ with degree 11 and an isometry $J \co \bound \widetilde{M}_S \to \partial_- \widetilde{M}_T$, which lifts the map $j\co\partial M_S\to \bound_-M_T$, such that $J^{-1}\Psi J$ extends across $\widetilde{M}_S$.\end{cor}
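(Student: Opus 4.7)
The plan is to construct $\widetilde{M}_S$ as the convex core of $\mathbb{H}^3/\Delta_0'$ for a carefully chosen index-$11$ subgroup $\Delta_0' < \Delta_0$, and to choose $\Delta_0'$ so that the $\Lambda$-action on $\Delta_0/\Delta_0'$ breaks into three orbits of sizes $1$, $5$, $5$, with point stabilizers respectively $\Lambda$ and two $\Delta_0$-conjugates of $\Lambda_0$. Standard covering-space theory then forces $\bound \widetilde{M}_S$ to have three components isometric to $\calh/\Lambda$, $\calh/\Lambda_0$, and $\calh/\Lambda_0$, matching the three components of $\bound_-\widetilde{M}_T$ identified in Lemma \ref{the T_0 cover}. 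With such a matching in place, $J$ can be defined component-by-component, sending the degree-$1$ boundary of $\widetilde{M}_S$ to the degree-$1$ component of $\bound_-\widetilde{M}_T$ and each degree-$5$ component to one of $\g(\calh)/(\g\Lambda_0\g^{-1})$ or $\g^{-1}(\calh)/(\g^{-1}\Lambda_0\g)$. That $J$ lifts $j$ will follow from Proposition \ref{omnidef}(\ref{naturelle}).

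To find $\Delta_0'$, I would exploit the identification $\theta|_\Lambda = \phi|_\Lambda$ noted in the paragraph above Lemma \ref{the T_0 cover}, which makes the $\Lambda$-set $\Gamma_{T_0}/\Omega_{T_0}$ (with orbit decomposition $1+5+5$) the model that $\Delta_0/\Delta_0'$ must reproduce. One approach is to pair the reduction $\theta|_{\Delta_0}\co \Delta_0 \to \Delta_0/\Omega_0 \cong \mathbb{Z}/5$ from Lemma \ref{algebra} with a second $\Delta_0$-homomorphism so that the combined map induces a transitive $\Delta_0$-action on an $11$-element set that is $\Lambda$-equivariantly isomorphic to $\Gamma_{T_0}/\Omega_{T_0}$. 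An alternative is to use the amalgamated free product structure $\Delta_1 = \Delta_0 *_\Lambda \Gamma_T$ implied by the assembly of Proposition \ref{tangle structure}, extending $\phi$ coherently across $\Delta_1$ and restricting to $\Delta_0$.

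For extension of $J^{-1}\Psi J$ across $\widetilde{M}_S$, it suffices to show that $\sfm_1$ normalizes $\Delta_0'$. Since $\sfm_1 \in \mathrm{PSL}_2(\mathbb{Z}) < H_0$ normalizes both $\Lambda$ (Lemma \ref{mutator}) and $\Omega_0$ (which is normal in $H_0$ by Lemma \ref{algebra}), this will hold provided the construction of $\Delta_0'$ is $\sfm_1$-equivariant. Under that condition, $\sfm_1$ descends to a self-isometry of $\mathbb{H}^3/\Delta_0'$, which by Fact \ref{core covers} restricts to a self-isometry of $\widetilde{M}_S$; a comparison on each boundary component confirms that this restriction agrees with $J^{-1}\Psi J$.

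The main obstacle is the construction of $\Delta_0'$ realizing all of the above properties simultaneously. Since $[\Delta_0:\Omega_0]=5$ is coprime to $11$, the subgroup $\Delta_0'$ cannot be extracted from the $\Omega_0$-tower over $M_S$, so fresh data must be imported --- most naturally from the $\Gamma_{T_0}$-side via $\phi$, transported across $\Lambda$. Arranging this transport also to be $\sfm_1$-equivariant is the delicate point.
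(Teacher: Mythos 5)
Your proposal has a genuine gap: the index-$11$ subgroup $\Delta_0' < \Delta_0$ on which the whole construction rests is never produced, and you yourself flag it as ``the main obstacle'' and ``the delicate point.'' By insisting that $\widetilde{M}_S$ be a \emph{connected} cover you have imposed a constraint that the statement does not require and that makes the problem much harder: you must simultaneously arrange a transitive $\Delta_0$-action on $11$ points, a $\Lambda$-orbit decomposition $1+5+5$ with prescribed stabilizers, and $\sfm_1$-equivariance, and neither of your two suggested routes (pairing $\theta|_{\Delta_0}$ with a second homomorphism, or extending $\phi$ across an amalgam) is carried far enough to show such a $\Delta_0'$ exists. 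As written, the argument does not go through.

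The paper's proof sidesteps the issue entirely by taking $\widetilde{M}_S$ to be \emph{disconnected}: the disjoint union $M_S \sqcup \widetilde{N}_a \sqcup \widetilde{N}_b$, where $\widetilde{N}_a$ and $\widetilde{N}_b$ are copies of $C(\Omega_0)$ from Lemma \ref{algebra}. The total degree is $1+5+5=11$, and the three boundary components are $\calh/\Lambda$, $\calh/\Lambda_0$, and $\calh/\Lambda_0$, which match exactly the three components of $\partial_-\widetilde{M}_T$ produced in Lemma \ref{the T_0 cover} (one covering with degree $1$ and two with degree $5$, with stabilizers conjugate to $\Lambda_0$). The map $J$ is then defined componentwise much as you envision, and the extension of $J^{-1}\Psi J$ is checked on each piece separately: over the $M_S$ component it is induced by $\sfm_1$, which normalizes $\Delta_0$; over $\widetilde{N}_a$ and $\widetilde{N}_b$ it is induced by $\g^{-1}\sfm_1\g$ and $\g\sfm_1\g^{-1}$, which lie in $H_0$ and hence normalize $\Omega_0$, since $\Omega_0$ is normal in $H_0$ by Lemma \ref{algebra}. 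Nothing in the statement or in the later gluing of Theorem \ref{glue covers} requires $\widetilde{M}_S$ itself to be connected --- connectivity of the assembled cover $N_n$ comes from the $\widetilde{M}_T^{(i)}$ --- so the disconnected choice is both legitimate and decisively simpler. If you could actually exhibit a $\Delta_0'$ with all the properties you list, your connected variant would also work, but that existence question is left entirely open in your write-up.
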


\begin{proof}  Let $\widetilde{M}_S$ be the disjoint union $M_S \sqcup \wt{N}_a \sqcup \wt{N}_b$, where $\widetilde{N}_a$ and $\widetilde{N}_b$ are copies of $C(\Omega_0)$.  Define $J$ component-wise as follows.
\begin{itemize}
 \item  Recall the isometric embedding $\iota_-^{(0)} \co F^{(0)}\to\partial M_S$ and let $\iota_+\co F^{(0)}\to\partial \widetilde{M}_T$ be the isometric embedding given by Lemma \ref{the T_0 cover}.  Define $J|_{\partial M_S}$ as $\iota_+\circ(\iota_-^{(0)})^{-1}$.
 \item  Let $\iota_a\co \calh/\Lambda_0\to\partial \widetilde{N}_a$ be the isometric embedding guaranteed by Lemma \ref{algebra} and let $\iota_{\g}\co \calh/\Lambda_0\to\partial\widetilde{M}_{T_0}$ be the composition of the isometric embedding $\g(\calh)/\g\Lambda_0\g^{-1}\to\partial_-\widetilde{M}_{T_0}$ given by Lemma \ref{the T_0 cover} with the natural isometry $\calh/\Lambda_0\to\g(\calh)/\g\Lambda_0\g^{-1}$.  Define $J|_{\partial \widetilde{N}_a}$ as $\iota_{\g}\circ\iota_a^{-1}$.
 \item Define $J|_{\partial \widetilde{N}_b}$ as $\iota_{\g^{-1}}\circ \iota_b^{-1}$ in analogy with the case above.
 \end{itemize}

To see that $J$ lifts $j$, notice first that Proposition \ref{tangle structure} implies that $j=\iota_+^{(0)}(\iota_-^{(0)})^{-1}$.  Now, for $x \in \mathbb{H}^3$, the covering map $\mathbb{H}^3/\Omega_0 \to \mathbb{H}^3/\Delta_0$ sends $\Omega_0(x)$ to $\Gamma_0(x)$ and $\calh/\Lambda_0\to F^{(0)}$ sends $\Lambda_0(x)$ to $\Lambda(x)$.  So $\iota_a$ and $\iota_b$ lift $\iota_0$.  Similarly, $\iota_{\g}$ and $\iota_{\g^{-1}}$ lift $\iota_+^{(0)}$, since $\iota_+^{(0)}$ factors as the natural composition $F^{(0)} \to \sfx(\calh)/\sfx\Lambda\sfx^{-1}\to\partial_- M_T$ whenever $\sfx \in \Gamma_T$. 

Lemma 5.8 of \cite{CheDe} implies that $\sfm_1$ normalizes $\Delta_0$.  So, the restriction of $J^{-1}\Psi J$ to $\partial M_S$ extends over $M_S$.  A calculation shows that $\g\sfm_1\g^{-1}=\left(\begin{smallmatrix}1 & -1 \\ 2 & -1\end{smallmatrix}\right)$, so $\g\sfm_1\g^{-1}$ preserves $\calh$ and $\sfm_1$ preserves $\g^{-1}(\calh)$.  For $x \in \calh$, the map $J^{-1}\Psi J|_{\partial \widetilde{N}_a}$ takes $\Omega_0(x)$ to $\Omega_0(\g^{-1}\sfm_1\g(x))$.  This map extends over $\wt{N}_a$ since $\g^{-1}\sfm_1\g(x) \in H_0$ and, by Lemma \ref{algebra}, $\Omega_0$ is normal in $H_0$.

Since $\Psi$ takes $\partial_- \widetilde{M}_T$ to itself and preserves the components covered by $\calh$ and $\g^{-1}(\calh)$, it also preserves the component covered by $\g(\calh)$.  For $x \in \calh$, the map $J^{-1}\Psi J|_{\partial \widetilde{N}_b}$ takes $\Omega_0(x)$ to $\Omega_0(\g\sfm_1\g^{-1}(x))$.  As before, this map extends over $\wt{N}_b$ because $\Omega_0$ is normal in $H_0$. \end{proof}

\begin{thm}\label{glue covers}\GlueCovers\end{thm}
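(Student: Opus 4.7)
The plan is to construct $N_n$ by inductively gluing $\widetilde{M}_S$ with $n$ copies $\widetilde{M}_T^{(1)},\ldots,\widetilde{M}_T^{(n)}$ of $\widetilde{M}_T$, as indicated schematically in Figure \ref{oogabooga}. The first gluing (of $\widetilde{M}_S$ to $\widetilde{M}_T^{(1)}$) is done via the isometry $J\co\partial\widetilde{M}_S\to\partial_-\widetilde{M}_T^{(1)}$ from Corollary \ref{over M_S}; the subsequent gluings (of $\widetilde{M}_T^{(i-1)}$ to $\widetilde{M}_T^{(i)}$) use the doubling involution $R_T$, which lifts $r_T$ and interchanges $\partial_-\widetilde{M}_T$ and $\partial_+\widetilde{M}_T$. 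Because $J$ lifts $j$ and $R_T$ lifts $r_T$, the covering maps $p_S$ and the $p_T$'s assemble to a consistently defined projection $N_n\to B^3-T_n$, which has degree $11$ since each piece contributes $11$ sheets and the gluings match sheets with sheets.

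I then define $\Psi\co N_n\to N_n$ piecewise: on $\widetilde{M}_S$ take the extension of $J^{-1}\Psi J$ furnished by Corollary \ref{over M_S}, and on each $\widetilde{M}_T^{(i)}$ take the hidden extension $\Psi$ of Corollary \ref{the hidden extension}. Compatibility at the first seam is built into Corollary \ref{over M_S}. Compatibility at each subsequent seam follows from the fact that $\Psi$ commutes with $R_T$: under the $R_T$-gluing, the restriction of $\Psi$ to $\partial_+\widetilde{M}_T^{(i-1)}$ matches the restriction to $\partial_-\widetilde{M}_T^{(i)}$. The same commutation, together with the fact that the endpoint labeling on $S^2\times\partial I$ is $r_T$-invariant, shows that $\Psi|_{\partial_+\widetilde{M}_T^{(n)}}$ is a lift of the mutation $(1\,3)(2\,4)$ on $S^{(n)}-T_n$. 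Thus $\Psi$ is an extension of that mutation. It is a \emph{hidden} extension by the argument used in the proof of Theorem \ref{hidden extension}: were $(1\,3)(2\,4)$ to extend over $B^3-T_n$, the mutant of $L_{m+n}$ along $S^{(n)}$ would be isometric to $L_{m+n}$, contradicting Theorem 2 of \cite{CheDe}.

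For the second assertion, attach to $N_n$ a further $m$ copies of $\widetilde{M}_T$ and a terminal mirror-image copy of $\widetilde{M}_S$ (corresponding to the $\overline{S}$ piece of $L_{m+n}$) using the same inductive recipe; this produces an $11$-fold cover of $S^3-L_{m+n}$ containing $N_n$. The piecewise definition of $\Psi$ extends without change across the new pieces (with the roles of $\partial_\pm$ reversed on the mirror side), and remains a hidden symmetry by the same non-extendability argument applied to $S^3-L_{m+n}$. The main obstacle throughout is ensuring that the gluing compatibilities hold, but these have been engineered into Corollaries \ref{the hidden extension} and \ref{over M_S} in advance---the commutation of $\Psi$ with $R_T$ and the explicit boundary-matching of $J$ are precisely what is needed for the seams to close up---so the remainder of the argument is careful bookkeeping.
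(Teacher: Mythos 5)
Your proof is correct and follows essentially the same route as the paper: assemble $N_n$ as the adjunction space $\wt{M}_S\cup_J\wt{M}_T^{(1)}\cup_{R_T}\cdots\cup_{R_T}\wt{M}_T^{(n)}$, define $\Psi$ piecewise using Corollaries \ref{the hidden extension} and \ref{over M_S} with the $R_T$-commutation closing the seams, and conclude hiddenness from the non-extendability of the mutation. The only cosmetic difference is on the mirror side: the paper glues $N_n$ to the mirror image $\overline{N}_m$ of the whole cover of $B^3-T_m$ in one step rather than attaching the remaining pieces one at a time, but this yields the same space and the same argument.
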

\begin{proof}  
Let $\bound_+ \wt{M}_T^{(j)}= \bound \widetilde{M}_T^{(j)}-\partial_- \widetilde{M}_T^{(j)}$ and compose $R_T\co \bound_+\wt{M}_T \to \bound_-\wt{M}_T$ with marking maps to obtain isomorphisms $R_T \co \bound_+\wt{M}_T^{(j)} \to \bound_- \wt{M}_T^{(j+1)}$.  Define $N_n$ to be the adjunction space
\[ \wt{M}_S \cup_J \wt{M}_T^{(1)} \cup_{R_T} \dots \cup_{R_T} \widetilde{M}_T^{(n)}.\]
Because $J$ lifts $j$ and $R_T$ lifts $r_T$, the covering maps $p_S$ and $p_T$ determine covering spaces $p_n \co N_n \to B^3 - T_n$, which restrict to $p_S$ and $p_T$ on the factors of the adjunction space.

The hidden extension $\Psi$ is given on each of the $\widetilde{M}_T^{(i)}$ by the eponymous symmetry from Corollary \ref{the hidden extension} and on $\widetilde{M}_S$ by the extension of $J^{-1}\Psi J$ described in Corollary \ref{over M_S}.  Corollary \ref{the hidden extension} implies that  $\Psi \co N_n \to N_n$ does not descend to $B^3 - T_n$. 

Let $\overline{p}_n \co \overline{N}_m \to \overline{B^3-T_m}$ be the mirror image of the cover $p_n$ and $\overline{\Psi} \co \overline{N}_m \to \overline{N}_m$ the mirror image of $\Psi$.  Using the mirror map $\bound N_n \to \bound N_m$ to glue, we form an adjunction space $\wt{M}_{m+n} = N_n \cup \overline{N}_m$.  The covering maps $p_n$ and $\overline{p}_n$ determine a covering space $\wt{M}_{m+n} \to S^3-L_{m+n}$ and $\Psi$ and $\overline{\Psi}$ determine an isometry $\wt{M}_{m+n} \to \wt{M}_{m+n}$.  As defined, the covering space and isometry restrict to $p_n$ and $\Psi$ on $N_n \subset \wt{M}_{m+n}$.\end{proof} 

\begin{remark}  The referee has asked whether the covers of $B^3-T_n$ and $S^3-L_{m+n}$ described in Theorem \ref{glue covers} have minimal degree among those admitting hidden extensions of $(1\,3)(2\,4)$.  We suspect this is so but cannot quite prove it.  Below we prove a related but weaker assertion. Suppose $\Phi\co M_1\to M_2$ is a hidden symmetry of $M_{T_0}$ that restricts to a lift of $(1\,3)(2\,4)$ on a component $S_1$ of $p_1^{-1}(\partial_- M_T)$, where $p_1\co M_1\to M_{T_0}$ and $p_2\co M_2\to M_{T_0}$ are finite-degree connected covers of $M_{T_0}$.  If $\Phi$ is induced by some $\sfn\in H_{T_0}$ from Lemma \ref{the orbifold piece} then the $p_i$ have degree at least 11.

Let $\Gamma_1$ and $\Gamma_2$ be the finite-index subgroups of $\Gamma_{T_0}$ respectively corresponding to the $M_i$.  There exist finite-index subgroups $\Lambda_i$ of $\Lambda$, and $g_i\in\Gamma_{T_0} - \Gamma_i$ so that $S_1$ is represented in $\Gamma_1$ by $g_1\Lambda_1g_1^{-1}$ and $S_2 = \Phi(S_1)$ by $g_2\Lambda_2g_2^{-1}$.  For each $i$, the restriction of $p_i$ to $S_i$ has ${p_i}_*(g_i\lambda g_i^{-1}) = \lambda$ for $\lambda\in \Lambda_i$.  That $\Phi\co S_1\to S_2$ lifts $(1\,3)(2\,4)$ translates at the level of induced maps to 
$$ \sfm_1\lambda\sfm_1^{-1} = g_2^{-1}\Phi_*(g_1\lambda g_1^{-1})g_2 = g_2^{-1}\sfn g_1 \lambda g_1^{-1}\sfn^{-1}g_2 $$
for each $\lambda\in \Lambda_1$, since $\sfm_1\co \Lambda\to\Lambda$ is the induced map of $(1\,3)(2\,4)$.  But the centralizer of $\Lambda_1$ in $\mathrm{PSL}_2(\mathbb{C})$ is trivial, so we have $\sfn = g_2\sfm_1g_1^{-1}$.  Lemma \ref{permarep}(\ref{psha}) now implies that $\Gamma_1$ has index at least $11$ in $\Gamma_{T_0}$.\end{remark}

\subsection*{Acknowledgement}  We are grateful to the referee for asking interesting questions, and for suggesting changes that have improved the quality of this paper.

\bibliographystyle{plain}
\bibliography{hidden_bib}

\end{document}